\begin{document}
\numberwithin{equation}{section}

\def\1#1{\overline{#1}}
\def\2#1{\widetilde{#1}}
\def\3#1{\widehat{#1}}
\def\4#1{\mathbb{#1}}
\def\5#1{\frak{#1}}
\def\6#1{{\mathcal{#1}}}

\newcommand{\w}{\omega}
\newcommand{\Lie}[1]{\ensuremath{\mathfrak{#1}}}
\newcommand{\LieL}{\Lie{l}}
\newcommand{\LieH}{\Lie{h}}
\newcommand{\LieG}{\Lie{g}}
\newcommand{\de}{\partial}
\newcommand{\R}{\mathbb R}
\newcommand{\FH}{{\sf Fix}(H_p)}
\newcommand{\al}{\alpha}
\newcommand{\tr}{\widetilde{\rho}}
\newcommand{\tz}{\widetilde{\zeta}}
\newcommand{\tk}{\widetilde{C}}
\newcommand{\tv}{\widetilde{\varphi}}
\newcommand{\hv}{\hat{\varphi}}
\newcommand{\tu}{\tilde{u}}
\newcommand{\tF}{\tilde{F}}
\newcommand{\debar}{\overline{\de}}
\newcommand{\Z}{\mathbb Z}
\newcommand{\C}{\mathbb C}
\newcommand{\Po}{\mathbb P}
\newcommand{\zbar}{\overline{z}}
\newcommand{\G}{\mathcal{G}}
\newcommand{\So}{\mathcal{S}}
\newcommand{\Ko}{\mathcal{K}}
\newcommand{\U}{\mathcal{U}}
\newcommand{\B}{\mathbb B}
\newcommand{\NC}{\mathcal B_\beta}
\newcommand{\oB}{\overline{\mathbb B}}
\newcommand{\Cur}{\mathcal D}
\newcommand{\Dis}{\mathcal Dis}
\newcommand{\Levi}{\mathcal L}
\newcommand{\SP}{\mathcal SP}
\newcommand{\Sp}{\mathcal Q}
\newcommand{\A}{\mathcal O^{k+\alpha}(\overline{\mathbb D},\C^n)}
\newcommand{\CA}{\mathcal C^{k+\alpha}(\de{\mathbb D},\C^n)}
\newcommand{\Ma}{\mathcal M}
\newcommand{\Ac}{\mathcal O^{k+\alpha}(\overline{\mathbb D},\C^{n}\times\C^{n-1})}
\newcommand{\Acc}{\mathcal O^{k-1+\alpha}(\overline{\mathbb D},\C)}
\newcommand{\Acr}{\mathcal O^{k+\alpha}(\overline{\mathbb D},\R^{n})}
\newcommand{\Co}{\mathcal C}
\newcommand{\Hol}{{\sf Hol}(\mathbb H, \mathbb C)}
\newcommand{\Aut}{{\sf Aut}(\mathbb D)}
\newcommand{\D}{\mathbb D}
\newcommand{\oD}{\overline{\mathbb D}}
\newcommand{\oX}{\overline{X}}
\newcommand{\loc}{L^1_{\rm{loc}}}
\newcommand{\la}{\langle}
\newcommand{\ra}{\rangle}
\newcommand{\thh}{\tilde{h}}
\newcommand{\N}{\mathbb N}
\newcommand{\kd}{\kappa_D}
\newcommand{\Ha}{\mathbb H}
\newcommand{\ps}{{\sf Psh}}
\newcommand{\Hess}{{\sf Hess}}
\newcommand{\subh}{{\sf subh}}
\newcommand{\harm}{{\sf harm}}
\newcommand{\ph}{{\sf Ph}}
\newcommand{\tl}{\tilde{\lambda}}
\newcommand{\gdot}{\stackrel{\cdot}{g}}
\newcommand{\gddot}{\stackrel{\cdot\cdot}{g}}
\newcommand{\fdot}{\stackrel{\cdot}{f}}
\newcommand{\fddot}{\stackrel{\cdot\cdot}{f}}
\def\v{\varphi}
\def\Re{{\sf Re}\,}
\def\Im{{\sf Im}\,}
\def\rk{{\rm rank\,}}
\def\rg{{\sf rg}\,}
\def\Gen{{\sf Gen}(\D)}
\def\Pl{\mathcal P}

\newtheorem{theorem}{Theorem}[section]
\newtheorem{lemma}[theorem]{Lemma}
\newtheorem{proposition}[theorem]{Proposition}
\newtheorem{corollary}[theorem]{Corollary}

\theoremstyle{definition}
\newtheorem{definition}[theorem]{Definition}
\newtheorem{example}[theorem]{Example}

\theoremstyle{remark}
\newtheorem{remark}[theorem]{Remark}
\numberwithin{equation}{section}

\title[Regular poles and $\beta$-numbers]{Regular poles and $\beta$-numbers in the theory of holomorphic semigroups}
\author[F. Bracci]{Filippo Bracci}
\address{F. Bracci: Dipartimento Di Matematica\\
Universit\`{a} di Roma \textquotedblleft Tor Vergata\textquotedblright\ \\
Via Della Ricerca Scientifica 1, 00133 \\
Roma, Italy} \email{fbracci@mat.uniroma2.it}
\author[M.D. Contreras]{Manuel D. Contreras}
\address{M.D. Contreras \and  S. D\'{\i}az-Madrigal: Camino de los Descubrimientos, s/n\\
Departamento de Matem\'{a}tica Aplicada II \\
Escuela T\'ecnica Superior de Ingenier\'ia\\
Universidad de Sevilla\\
41092, Sevilla\\
Spain.} \email{contreras@us.es, madrigal@us.es}
\author[S. D\'{\i}az-Madrigal]{Santiago D\'{\i}az-Madrigal}
\thanks{Partially supported by the \textit{Ministerio
de Ciencia e Innovaci\'on} and the European Union (FEDER), project MTM2009-14694-C02-02, by the ESF Networking Programme ``Harmonic and Complex Analysis
and its Applications'' and by \textit{La Consejer\'{\i}a de Econom\'{\i}a, Innovaci\'{o}n y Ciencia
de la Junta de Andaluc\'{\i}a} (research group FQM-133)}

\date\today

\begin{abstract}
We introduce the notion of regular (boundary) poles for infinitesimal generators of semigroups of holomorphic self-maps of the unit disc. We characterize  such regular poles in terms of $\beta$-points ({\sl i.e.} pre-images of values with positive Carleson-Makarov $\beta$-numbers) of
the associated semigroup and of the associated K\"onigs intertwining function. We also define a natural duality operation in the cone of infinitesimal generators and show that the regular poles of an infinitesimal generator correspond to the regular null poles of the dual generator. Finally we apply such a construction to study radial multi-slits and give an example of a non-isolated radial slit whose tip has not a positive Carleson-Makarov $\beta$-number.
\end{abstract}

\subjclass[2000]{Primary 37L05; Secondary 32M25, 37C10, 31A20}

\keywords{Infinitesimal generators; boundary regular fixed points; regular poles; $\beta$-numbers; non-conformal points; multi-slits}

\maketitle

\section{Introduction}

The theory of semigroups of holomorphic self-maps of the unit disc has been developed  much in the past decades since the basic works of E. Berkson and H. Porta \cite{Berkson-Porta} and M. Heins \cite{H}  (see, {\sl e.g.}, \cite{Shb} and \cite{BCD2} for some recent accounts). Aside its own interest, the theory of semigroups  plays a fundamental r\^ole in  Loewner's theory (see, {\sl e.g.}, the recent paper \cite{BCD} where a general Loewner theory has been developed starting from semigroups theory). In the present paper we give a contribution to the general theory of semigroups by introducing and studying regular poles of infinitesimal generators.

Let $(\phi_t)$, $t\geq 0$, be a semigroup of holomorphic self-maps of the unit disc $\D$, generated by the holomorphic vector field $G$ (see Section \ref{semig} for definitions and properties). It is well known that, apart from the case $(\phi_t)$ is a group of elliptic automorphisms, there exists a unique point $\tau\in\de\D$, called the Denjoy-Wolff point of the semigroup, such that $(\phi_t)$ converges uniformly on compacta to the map $z\mapsto \tau$ as $t\to \infty$. Such a point is a ``regular zero'' of $G$, in the sense that $G(\tau)=0$ and $G'(\tau)\neq 0$ if $\tau \in \D$ and $\angle\lim_{z\to \tau} G(z)/(z-\tau)=L$ for some $L\leq 0$ if $\tau\in\de\D$ (here, as usual, $\angle\lim$ denotes non-tangential limit). Also, if $\tau\in \D$ then $\tau$ is the unique fixed point (in the interior of $\D$) of $\phi_t$ for all $t> 0$, while, if $\tau\in\de\D$ then $\tau$ is the unique boundary regular fixed point for $\phi_t$  with boundary dilatation coefficient less than or equal to $0$ for all $t>0$ (see Section \ref{boundaryreg}).

It is also known \cite[Theorem 1]{CDP2} (see also Section \ref{semig}) that the other boundary regular fixed points of $(\phi_t)$ correspond to  boundary regular null points of $G$.

Moreover, for a given semigroup $(\phi_t)$ there exists a (unique, once suitably normalized) univalent function $h:\D \to \C$ which simultaneously linearizes $\phi_t$ for all $t\geq 0$. Such a function is called the K\"onigs function of $(\phi_t)$ (see Section \ref{boundaryreg}). As the function $h$ intertwines $(\phi_t)$ with a linear semigroup of $h(\D)$, it is clear that the shape of $h(\D)$ reflects the dynamics of $(\phi_t)$. In particular, it is known (\cite{CD} and \cite{ESZ}) that a boundary regular fixed point  of $(\phi_t)$ corresponds to a direction going to infinity in $h(\D)$ contained in a fixed strip or in a fixed sector---depending on the displacement of the Denjoy-Wolff point.

There might be other types of singularities of $G$ on $\de \D$ and it is natural to expect them to have some dynamical meaning in terms of $(\phi_t)$. The aim of the present paper is to study ``regular poles'' of $G$, namely, points $x\in \de \D$ such that
\[
\angle\liminf_{z\to x}|G(z)(x-z)|=C,
\]
for some $C\in (0,+\infty)$ (see Section \ref{31}). In fact, as a consequence of the Berkson-Porta formula and a version of Julia's lemma due to C. Cowen and Ch. Pommerenke \cite{CP}, it turns out that, for all $y\in \de\D$ the limit $\angle\lim_{z\to y}G(z)(y-z)$ exists in $\C$, and we call its modulo the ``mass'' of the pole. Therefore, regular poles are in a sense the worst singularities an infinitesimal generator might have. Similarly to what happens for boundary regular null points of a given dilation (see \cite{CP}, \cite{EST}) we show that for any given $C>0$, there exist at most a finite number of regular poles of mass greater than or equal to $C$  (see Corollary~\ref{finito}).

Like boundary regular null points correspond to boundary regular fixed points,  also regular poles have  dynamical counterparts. Given a holomorphic map $f:\D\to \C$, we say that $x\in \de \D$ is a ``$\beta$-point'' (see Section \ref{32}) provided the limit
\begin{equation*}
\angle\limsup_{z\to x}|f'(z)|/|x-z|=L<+\infty.
\end{equation*}
We call $L$ the ``mass'' of the $\beta$-point $x$.

Our main result can then be stated as follows:

\begin{theorem}\label{main-intro}
Let $G$ be an infinitesimal generator and let $(\phi_t)$ be the associated semigroup of holomorphic self-maps of $\D$ and $h$ the associated K\"onigs function. Let $x\in \de \D$. Then the following are equivalent:
\begin{enumerate}
  \item $x$ is a regular pole of $G$,
  \item $x$ is a $\beta$-point for $\phi_t$ for some---hence for all---$t>0$,
  \item $x$ is a $\beta$-point for $h$.
\end{enumerate}
\end{theorem}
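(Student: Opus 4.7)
The plan is to derive everything from two identities obtained by differentiating the K\"onigs intertwining $h\circ\phi_t = T_t\circ h$, where $T_t(w)=e^{\lambda t}w$ (Denjoy-Wolff point in $\D$ or hyperbolic boundary case) or $T_t(w)=w+t$ (parabolic case). Differentiating in $t$ at $t=0$ yields the infinitesimal K\"onigs equation
\[
h'(z)\,G(z) = a\,h(z)+b,
\]
with $(a,b)=(\lambda,0)$ in the first situation and $(0,1)$ in the parabolic one; differentiating in $z$ gives the chain rule
\[
h'(\phi_t(z))\,\phi_t'(z) = T_t'(h(z))\,h'(z),
\]
so $\phi_t'(z)$ equals $h'(z)/h'(\phi_t(z))$ times a non-zero constant in $z$.

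For the equivalence (1)$\Leftrightarrow$(3), I rearrange the infinitesimal K\"onigs equation as
\[
\frac{h'(z)}{x-z} = \frac{a\,h(z)+b}{G(z)(x-z)}.
\]
As the introduction notes, the Berkson-Porta form of $G$ combined with the Cowen-Pommerenke version of Julia's lemma guarantees that $M:=\angle\lim_{z\to x}G(z)(x-z)$ exists in $\C$. In the parabolic case this directly gives $\angle\lim h'(z)/(x-z)=1/M$, and so $x$ is a regular pole of $G$ (i.e. $M\neq 0$) iff $x$ is a $\beta$-point of $h$ (i.e. $1/M$ is finite). In the elliptic/hyperbolic case, the same identity produces $\angle\lim h'(z)/(x-z) = \lambda h(x)/M$, conditional on showing that $h$ has a finite non-zero nontangential limit at $x$; I plan to extract this from the starlike/spirallike/strip-like geometry of the K\"onigs codomain $h(\D)$, using the linearization $T_t$ to transport the asymptotic $G(z)\sim M/(x-z)$ into controlled behaviour of $h$ near $x$.

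For (2)$\Leftrightarrow$(3), the derivative chain rule rewrites as
\[
\frac{\phi_t'(z)}{x-z} = T_t'(h(z))\,\frac{h'(z)/(x-z)}{h'(\phi_t(z))},
\]
so once (3) holds the numerator is nontangentially bounded, and I only need $h'(\phi_t(z))$ to remain bounded above and away from zero as $z\to x$ nontangentially; then the identity propagates the bound to $\phi_t'$, and reading it in the opposite direction proves the converse. Independence of $t>0$ follows because $T_t'(h(z))$ is a non-zero constant and from the semigroup property $\phi_{t+s}=\phi_t\circ\phi_s$ combined with Schwarz-Pick bounds on $\phi_s'$.

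The principal technical obstacle is the boundary control hidden in the two reductions above: proving that a regular pole of $G$ forces $h$ to have a well-defined, non-zero nontangential limit at $x$, and proving that $\phi_t$ sends $x$ to a boundary point nontangentially with $h'$ regular there. This is exactly where the $\beta$-number theory enters, since the $\beta$-point hypothesis encodes precisely the conformal regularity needed for these limits. My plan is to use the finiteness result analogous to Corollary~\ref{finito} to ensure such singular points are rare and hence well-approximated, and to exploit the explicit linearization $T_t$ on $h(\D)$ to track the orbit of the image $h(x)$; once the boundary values are identified, the two displayed identities close the equivalence loop.
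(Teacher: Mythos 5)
Your two displayed identities are exactly the ones the paper works with (it writes the first as $\phi_t'(z)G(z)=G(\phi_t(z))$ and the second pair as $h'G=G'(\tau)h$, resp.\ $h'G=1$), and you have correctly located where the difficulty sits; but the proposal stops precisely at that difficulty, and the plan you sketch for it would not close the gap. The crux is: if $x$ is a regular pole of $G$, one must prove that $\angle\lim_{z\to x}\phi_t(z)$ exists and is an \emph{interior} point $\sigma_t\in\D$ (equivalently, $\limsup_{r\to1}|\phi_t'(rx)|=0$); only then are $h'(\phi_t(z))$, $G(\phi_t(z))$ and $h(\phi_t(z))$ under control, and your identities deliver (2) and (3). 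You in fact state the needed fact incorrectly (``proving that $\phi_t$ sends $x$ to a boundary point''): if the radial limit of $\phi_{t_0}$ at $x$ landed on $\de\D$ the scheme would break, and ruling this out is the entire content of the hard step. The paper's argument for this is not a routine ``rarity/approximation'' statement: assuming $\limsup_r|\phi_{t_0}'(rx)|>0$, it shows via the auxiliary generator $-zp(\phi_{t_0}(z))$ and Lemma \ref{exists} that $x$ would be a regular, non-fixed contact point of $\phi_{t_0}$; Lemma \ref{L1} then makes $x$ a regular contact point of every $\phi_{t_0/2^n}$ with pairwise distinct images $\sigma_n$ (distinctness requires the Cowen--Pommerenke lemma that a boundary point cannot have two regular preimages, together with Proposition \ref{bsem}); finally each $\sigma_n$ is shown to be a regular pole of mass at least $C$, contradicting Corollary \ref{finito}. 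Your invocation of the finiteness result ``to ensure such singular points are rare and hence well-approximated'' does not reproduce this mechanism, and no alternative mechanism is supplied, so the implication (1)$\Rightarrow$(2) (and with it (1)$\Rightarrow$(3) in the elliptic case) remains unproved.

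Two smaller points. In the elliptic case you propose to obtain the finite nonzero limit of $h$ at $x$ ``from the starlike/spirallike geometry of $h(\D)$''; the paper instead gets it from the already-established fact $\phi_t(z)\to\sigma_t\in\D$ via $h(z)=e^{-G'(\tau)t}h(\phi_t(z))$, i.e.\ it is downstream of the same hard claim, so the geometric route would have to be made precise or the dependence accepted (the nonvanishing part, $\angle\liminf_{z\to x}|h(z)|>0$, does follow cheaply from univalence and $h(\tau)=0$). Also, for a boundary Denjoy--Wolff point the paper's normalized K\"onigs function satisfies $h\circ\phi_t=h+t$ in both the hyperbolic and parabolic cases, so your use of $T_t(w)=e^{\lambda t}w$ in the ``hyperbolic boundary case'' refers to a different intertwiner than the one appearing in the statement of the theorem.
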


In fact, we have  much more quantitative and precise versions of the previous result (see Theorem \ref{main} and Theorem \ref{main2}).

As regular poles of $G$ correspond to poles of order $1$, and boundary regular null points of $G$ (different from the Denjoy-Wolff point) to zeros of order $1$, one can see them as a sort of ``dual concepts''. We make this idea precise in Section \ref{5}, where, using the Berkson-Porta representation of  infinitesimal generators, to each $\R$-semicomplete holomorphic vector field $G$ we naturally associate a unique infinitesimal generator $\hat{G}$ in such a way that boundary regular null points of $G$ correspond to regular poles of $\hat{G}$. Using such duality, in Proposition \ref{GGhat} we give a precise and finer characterization of boundary regular fixed points of semigroups.

$\beta$-Points of univalent functions are connected---and that's the reason we chose such a name---to the ``$\beta$-numbers'' introduced using extremal length by L. Carleson and N. Makarov \cite{CM}.  In fact, D. Bertilsson \cite[Theorem 2.1]{Be},  showed that given a univalent function $f:\D \to \C$, the points of $\de(f(\D))$ with positive Carleson-Makarov $\beta$-numbers correspond one-to-one to the $\beta$-points of $f$ and the $\beta$-number of a point in $\de(f(\D))$, if positive, is equal to the reciprocal of the mass of the corresponding $\beta$-point  up to a non-vanishing factor.

The  $\beta$-numbers are related to  the Brennan conjecture. Referring the reader to \cite{Be} for a detailed account on the topic, here we content ourselves to say that the Brennan conjecture can be reformulated in terms of a (universal) bound on the sum of  $\beta$-numbers of any given univalent function. Although the Brennan conjecture has been solved for star-like and close-to-convex functions (the classes  which K\"onigs functions belong to) by B. Dahlberg and J. Lewis (see \cite{B}), the question of what the shape of a simply connected domain looks like near a point with positive $\beta$-number  is still open. In his PhD thesis, Bertilsson gives some necessary and some sufficient conditions of geometric flavor for a point to have positive $\beta$-number. In Section \ref{beta} we examine the shape of the image of K\"onigs functions near  the image of a $\beta$-point from a measure theoretic point of view, relating  the positive Borel measure  in the Herglotz representation formula of the infinitesimal generator to $\beta$-points (see Proposition \ref{mu-nc}).  Out of this, we give some sufficient conditions of geometric character for a point to be a $\beta$-point. In particular, as it is well known, the tip of an ``isolated radial slit'' always corresponds to a $\beta$-point, but, in Example \ref{no-tip} we show that the tip of a non-isolated radial slit might not have positive Carleson-Makarov $\beta$-number. Such a construction is based on a representation formula for holomorphic vector fields which generate a radial $m$-slits evolution in the complex plane  and on a geometrical interpretation of the terms arising, made by using our duality (see Proposition \ref{forma-pradial}).

\medbreak

This work started while the first and third named authors were visiting the Mittag-Leffler Institute, during the program ``Complex Analysis and Integrable Systems'' in Fall 2011 and it was completed while the first named author was visiting the Departamento de Matem\'{a}tica Aplicada II in Seville. The authors thank both the
 Mittag-Leffler  Institute and the University of Seville for the kind hospitality and the atmosphere experienced there.

\section{Preliminaries}
\subsection{Boundary regular contact and fixed points}\label{boundaryreg}

For the unproven statements, we refer the reader to, {\sl e.g.}, \cite{Abate}, \cite{CMbook} or \cite{Shb}.

Let $f:\D \to \D$ be holomorphic, $x\in \de \D$, and let
\[
\al_x(f):=\liminf_{z\to x} \frac{1-|f(z)|}{1-|z|}.
\]
By Julia's lemma, it follows that $\al_x(f)>0$. The number $\al_x(f)$ is called the {\sl boundary dilatation coefficient} of $f$.

If $f:\D \to \C$ is a map and $x\in \de\D$, we write $\angle\lim_{z\to x}f(z)$ for the non-tangential (or angular) limit of $f$ at $x$.

In the following we will make use of this version of Julia-Wolff-Carath\'eodory's theorem:
\begin{theorem}[Julia-Wolff-Carath\'eodory]\label{JWC}
Let $f: \D \to \D$ be holomorphic. Let $x\in \de \D$ and assume that
\[
\limsup_{(0,1)\ni r\to 1}|f(rx)|=1.
\]
Then
\[
\al_f(x)=\limsup_{(0,1)\ni r\to 1}|f'(rx)|.
\]
 Moreover, if $\al_f(x)<+\infty$ then there exists $y\in \de \D$ such that
\[
\angle \lim_{z\to x}f(z)=y, \quad \angle \lim_{z\to x} |f'(z)|=\angle\lim_{z\to x} \left|\frac{y-f(z)}{x-z}\right|=\al_f(x).
\]
\end{theorem}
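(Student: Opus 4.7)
The plan is to derive the statement from the classical Julia--Wolff--Carath\'eodory machinery (see \cite{Abate}, \cite{CMbook}, or \cite{Shb}). I would first establish the identity $\al_f(x)=\limsup_{(0,1)\ni r\to 1}|f'(rx)|$ as the combination of two one-sided inequalities, and then, in the finite case, invoke Carath\'eodory's angular derivative theorem to obtain the remaining conclusions.

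For the inequality $\al_f(x)\le \limsup_{r\to 1}|f'(rx)|$, it suffices to treat the case in which $|f'(rx)|\le M$ for all $r$ in some left neighborhood of $1$. Integrating along the radius shows that $f(rx)$ admits a limit $y\in\overline{\D}$ as $r\to 1$, with $|y-f(rx)|\le M(1-r)$. The hypothesis $\limsup_{r\to 1}|f(rx)|=1$ forces $|y|=1$; consequently $1-|f(rx)|\le|y-f(rx)|\le M(1-r)$, and passing to the $\liminf$ as $z=rx\to x$ yields $\al_f(x)\le M$.

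For the reverse inequality, the case $\al_f(x)=+\infty$ is trivial. Otherwise, Julia's lemma supplies $y\in\de\D$ with $|y-f(z)|^2/(1-|f(z)|^2)\le \al_f(x)|x-z|^2/(1-|z|^2)$ for every $z\in\D$. Specialising to $z=rx$, one first deduces $f(rx)\to y$ (and hence $|f(rx)|\to 1$) as $r\to 1$, and then obtains $(1-|f(rx)|)/(1-r)\le \al_f(x)(1+|f(rx)|)/(1+r)$. Combining with the Schwarz--Pick bound
$$
|f'(rx)|\le\frac{(1-|f(rx)|)(1+|f(rx)|)}{(1-r)(1+r)}
$$
and letting $r\to 1$ delivers $\limsup_{r\to 1}|f'(rx)|\le \al_f(x)$. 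Finally, once $\al_f(x)<+\infty$, Carath\'eodory's angular derivative theorem, as proved in the cited sources, provides the angular limit $\angle\lim_{z\to x}f(z)=y$ and the equality of the angular limits of $|f'(z)|$ and $|(y-f(z))/(x-z)|$ with $\al_f(x)$.

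The main obstacle is the passage from the radial information extracted from the Schwarz--Pick inequality to the $\liminf$ over arbitrary sequences $z\to x$ which defines $\al_f(x)$. Julia's lemma is exactly the bridge: the horocycle inclusion it produces upgrades radial estimates into non-tangential ones compatible with the $\liminf$ defining the boundary dilatation coefficient, after which the classical angular derivative argument of Carath\'eodory runs through unchanged.
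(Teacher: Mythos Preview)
The paper does not prove Theorem~\ref{JWC}; it is stated as a preliminary result with the blanket reference at the start of Section~\ref{boundaryreg} to \cite{Abate}, \cite{CMbook}, \cite{Shb}. Your proposal goes further than the paper by actually sketching the classical proof, and the sketch is essentially correct: the first inequality follows from integrating the radial derivative bound and using that the radial approach already majorizes the $\liminf$ defining $\al_f(x)$; the second from Julia's lemma combined with Schwarz--Pick; and the angular statements in the finite case are exactly Carath\'eodory's theorem. One minor comment: your final paragraph singles out as the ``main obstacle'' the passage from radial to unrestricted $\liminf$, but in the direction $\al_f(x)\le\limsup_r|f'(rx)|$ this is immediate (the $\liminf$ over all $z\to x$ is automatically $\le$ the radial $\liminf$), so no extra bridge via Julia's lemma is needed there; Julia's lemma is rather what supplies the existence of $y\in\de\D$ and the horocycle inequality used in the reverse direction and in the angular-limit conclusions.
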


\begin{definition}
Let $f:\D \to \D$ be holomorphic. A point $x\in \de \D$ is a {\sl contact point} if $\angle \lim_{z\to x}f(z)=y\in \de\D$. The point $x\in \de \D$ is a {\sl regular contact point} for $f$ if $\al_f(x)<+\infty$. A point $x\in\de \D$ is {\sl boundary regular fixed point} for $f$ if $x$ is a regular contact point and $\angle \lim_{z\to x}f(z)=x$.
\end{definition}

\begin{remark}
By the Julia-Wolff-Carath\'eodory Theorem \ref{JWC}  a regular contact point is in fact a contact point.
\end{remark}

If $x\in \de \D$ is a contact point for $f$, as customary, we let
\[
f(x):=\angle\lim_{z\to x}f(x).
\]

If $f:\D\to\D$ is holomorphic, not the identity nor an elliptic automorphism, by the Denjoy-Wolff theorem, there exists a unique point $\tau\in\oD$, called the {\sl Denjoy-Wolff point} of $f$, such that $f(\tau)=\tau$ and the sequence of iterates $\{f^{\circ k}\}$ converges uniformly on compacta of $\D$ to the constant map $z\mapsto \tau$. If the Denjoy-Wolff point $\tau$ of $f$ belongs to $\D$ (and $f$ is not the identity nor an elliptic automorphism) then by Schwarz lemma $|f'(\tau)|<1$, while, if $\tau\in\de\D$, then $\al_f(\tau)\leq 1$.

\begin{remark}\label{magg}
By the Wolff lemma, if $x$ is a regular contact point for $f$ which is not the Denjoy-Wolff point of $f$, then $\al_f(x)>1$.
\end{remark}

\begin{lemma}\label{L1}
Let $f,g: \D \to \D$ be holomorphic. Let $h:=f\circ g$. Suppose that $x\in \de \D$ is a (regular) contact point for $h$. Then $x$ is a (regular) contact point for $g$ and the point $g(x)\in \de \D$ is a (regular) contact point for $f$. Moreover, $h(x)=\angle \lim_{z\to g(x)}f(z)$ and $\al_h(x)=\al_f(g(x))\cdot \al_g(x)$.
\end{lemma}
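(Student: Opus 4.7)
The plan is to reduce everything to the behavior of $h=f\circ g$ near $x$: first produce a well-defined angular boundary value $y'=\angle\lim_{z\to x}g(z)\in\de\D$ (which will then be the legitimate value of $g(x)$), next verify that $y'$ is a contact point for $f$ whose angular limit there coincides with $h(x)$, and finally derive the multiplicative formula from the Julia--Wolff--Carath\'eodory theorem.

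A first, essentially routine observation is that $|g(z)|\to 1$ as $z\to x$ non-tangentially. Indeed, along any Stolz sequence $z_n\to x$ one has $|h(z_n)|=|f(g(z_n))|\to 1$ by the hypothesis $\angle\lim h(z)=h(x)\in\de\D$, and if some subsequence $g(z_{n_k})\to w_0\in\D$, continuity of $f$ would force $h(z_{n_k})\to f(w_0)\in\D$, contradicting $|h(x)|=1$.

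The main obstacle is the existence of the angular limit $y'=\angle\lim g(z)$. The non-tangential cluster set of $g$ at $x$ is, by the previous step, a closed connected subset of $\de\D$, hence either a single point or a genuine arc; the hypothesis that $f\circ g=h$ converges angularly at $x$, combined with Lindel\"of's theorem (or, in the regular case, with the horocyclic inequality for $h$ supplied by Julia's lemma), is used to exclude the arc alternative and force the limit to exist. A further application of Lindel\"of's theorem, this time to $f$ along the curve $g(\gamma)$ for any non-tangential Jordan arc $\gamma$ ending at $x$, then identifies $\angle\lim_{w\to y'}f(w)=h(x)$, so that $y'=:g(x)$ is a contact point for $f$ with $f(g(x))=h(x)$ as claimed.

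In the regular case $\alpha_h(x)<+\infty$, Theorem \ref{JWC} gives $\angle\lim_{z\to x}|h'(z)|=\alpha_h(x)$. The Stolz-angle propagation built into Julia's lemma ensures that $g(z)$ approaches $g(x)$ non-tangentially inside $\D$ as $z\to x$ non-tangentially, so that Theorem \ref{JWC} applied to $g$ at $x$ and to $f$ at $g(x)$ yields $\angle\lim_{z\to x}|g'(z)|=\alpha_g(x)$ and $\angle\lim_{z\to x}|f'(g(z))|=\alpha_f(g(x))$. Taking non-tangential limits in the chain rule $h'(z)=f'(g(z))\,g'(z)$ then produces the identity $\alpha_h(x)=\alpha_f(g(x))\cdot\alpha_g(x)$, which forces finiteness of both factors on the right.
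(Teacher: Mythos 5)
Your outline follows the same skeleton as the paper's proof (show the radial cluster set of $g$ at $x$ lies on $\de\D$, show it is a single point, transfer the limit to $f$ by Lindel\"of, then multiply Julia quotients), but the decisive step is missing. You correctly reduce to the dichotomy ``the non-tangential cluster set of $g$ at $x$ is a point or a nondegenerate closed arc $A\subset\de\D$,'' and then assert that Lindel\"of's theorem ``is used to exclude the arc alternative.'' It cannot: Lindel\"of's theorem only upgrades a limit along one curve to a non-tangential limit, and at this stage you do not yet know that $g$ has a limit along any curve. The paper's argument here is genuinely different in content: if $A$ were a nondegenerate arc, then by Fatou's theorem $f$ has radial limits at almost every $q\in A$; for each interior such $q$ the radius to $q$ meets the curve $r\mapsto g(rx)$ in a sequence tending to $q$ (a topological consequence of $q$ being an interior point of the cluster arc), along which $f$ takes the values $h(rx)\to h(x)$; hence $f$ has radial limit equal to the constant $h(x)$ on a set of positive measure, and the Riesz--Privalov uniqueness theorem forces $f\equiv h(x)\in\de\D$, which is absurd for $f:\D\to\D$. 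Without some argument of this kind (or, in the regular case only, the cheaper route $\al_g(x)\le \al_h(x)\,\frac{1+|f(0)|}{1-|f(0)|}<\infty$ from the factorization of Julia quotients, which your parenthetical gestures at but does not carry out) the existence of $g(x)$ is unproved, and everything downstream collapses.

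A secondary, fixable issue: in the regular case you apply the Julia--Wolff--Carath\'eodory theorem to $f$ at $g(x)$ and to $g$ at $x$ to get angular limits of $|f'|$ and $|g'|$, and only afterwards conclude that $\al_f(g(x))$ and $\al_g(x)$ are finite. This is circular, since JWC yields those angular limits (and the non-tangential approach of $g(z)$ to $g(x)$ that you invoke) only under the prior hypothesis that the respective boundary dilatation coefficients are finite. The clean order, as in the paper, is to first deduce finiteness of both factors from
\[
\frac{1-|h(z)|}{1-|z|}=\frac{1-|f(g(z))|}{1-|g(z)|}\cdot\frac{1-|g(z)|}{1-|z|}
\]
(each factor has positive $\liminf$ by Julia's lemma, so a finite left-hand $\liminf$ forces both right-hand $\liminf$s to be finite), and only then pass to radial limits via JWC to obtain the product formula.
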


\begin{proof}
Let $\gamma(r):=g(rx)$, for $r\in (0,1)$. Let $\Omega_\gamma$ be the  $\omega$-limit of $\gamma$, namely, $q\in \Omega_\gamma$ if there exists $\{r_k\}\subset (0,1)$ converging to $1$ such that $\gamma(r_k)\to q$. We claim that there exists $\sigma\in \de \D$ such that  $\Omega_\gamma=\{\sigma\}$. Indeed, if $q\in \Omega_\gamma\cap \D$, and $\{r_k\}\subset (0,1)$ is a sequence converging to $1$ such that $\gamma(r_k)\to q$, then $h(r_k)=f(\gamma(r_k))\to f(q)\in \D$, contradicting the hypothesis that $h$ has a contact point at $x$. Therefore $\Omega_\gamma$ is a connected and compact subset of $\subset \de \D$, namely, it is a closed arc $A$, possibly reducing to a point. Suppose that $A$ is not a point. Thus $\lambda(A)>0$, where $\lambda$ is the Lebesgue measure on $\de \D$. By Fatou's theorem, there exists a subset $A'\subseteq A$ such that  $\lambda(A)=\lambda(A')$ and $f$ has radial limit at all points $q\in A'$. Let $q\in A'$, $q\not\in\de A$ (where the boundary is taken in $\de \D$). Since $q\in A=\Omega_\gamma$ but it is not an extreme of the arc $A$, then the radial segment $\Gamma_q:=\{sq: s\in (0,1)\}$ intersects the curve $\gamma$ infinitely many times, that is, there exists a sequence $\{r_{k}\}\subset (0,1)$ converging to $1$ and such that $\gamma(r_k)\in \Gamma_q$. Therefore
\[
\lim_{(0,1)\ni r\to 1}f(r)=\lim_{k\to \infty} f(\gamma(r_k))=\lim_{k\to \infty} h(r_k)=h(x).
\]
Hence, again by Fatou's theorem, $f\equiv h(x)$, a contradiction. Thus $A$ reduces to a point $\sigma\in \de \D$. This shows that $g$ has radial limit $\sigma\in \de \D$ and Lindel\"of's theorem implies that
\[
\angle \lim_{z\to x}g(z)=\sigma=:g(x).
\]
By the same token, $f$ has limit $h(x)$ along the curve $(0,1)\ni r\mapsto g(r)$ which converges to $g(x)$, hence it has non-tangential limit $h(x)$ at $g(x)$.

Now,
\begin{equation*}
\left|\frac{1-h(r)}{1-r} \right|=\left|\frac{1-f(g(r))}{1-g(r)} \right|\left|\frac{1-g(r)}{1-r} \right|,
\end{equation*}
and the rest of the statement follows from the Julia-Wolff-Carath\'eodory Theorem \ref{JWC}.
\end{proof}

\subsection{Semigroups and infinitesimal generators}\label{semig} A semigroup $(\phi_t)$ of holomorphic
self-maps of $\D$ is a continuous homomorphism between the
additive semigroup $(\R^+, +)$ of positive real numbers and the
semigroup $({\sf Hol}(\D,\D),\circ)$ of holomorphic self-maps
of $\D$ with respect to the composition, endowed with the
topology of uniform convergence on compacta.

By Berkson-Porta's theorem \cite{Berkson-Porta}, if $(\phi_t)$
is a semigroup in ${\sf Hol}(\D,\D)$ then $t\mapsto \phi_t(z)$
is analytic and there exists a unique holomorphic vector field
$G:\D\to \C$ such that
\[
\frac{\de \phi_t(z)}{\de
t}=G(\phi_t(z)).
\]
Such a vector field $G$, called the  {\sl infinitesimal generator} of $(\phi_t)$,  is {\sl semicomplete} in the sense that the Cauchy problem
\[
\begin{cases}
\stackrel{\bullet}{x}=G(x(t))\\
x(0)=z
\end{cases}
\]
has a  solution $x^z:[0,+\infty)\to \D$ for all $z\in \D$.
Conversely, any semicomplete holomorphic vector field in $\D$
generates a semigroup in ${\sf Hol}(\D,\D)$.

We denote by $\Gen$ the set of infinitesimal generators in $\D$. Recall that $\Gen$ is a closed (in ${\sf Hol}(\D,\C)$) convex cone with vertex in $0$.

Let $G\not\equiv 0$ be an infinitesimal generator with
associated semigroup $(\phi_t)$. Then there exists a unique
$\tau\in\oD$ and a unique $p:\D\to \C$ holomorphic with $\Re
p(z)\geq 0$ such that the following formula, known as the {\sl Berkson-Porta
formula}, holds
\[
G(z)=(z-\tau)(\overline{\tau}z-1)p(z).
\]
The point $\tau$ in the Berkson-Porta formula turns out to be
the  Denjoy-Wolff point of $\phi_t$ for all $t> 0$.
Moreover, if $\tau\in \de\D$ it follows $\angle\lim_{z\to
\tau}\phi_t'(z)=e^{\beta t}$ for some $\beta\leq 0$.

A {\sl boundary regular fixed point}  for a semigroup
$(\phi_t)$ is a point $p\in \de \D$ which is a boundary regular fixed point  all $\phi_t$, $t>0$.

A {\sl boundary regular null point} for an infinitesimal generator $G$, is a point $x\in \de \D$ such that
\[
\angle\lim_{z\to x}\frac{G(z)}{z-x}=\ell\in \R,
\]
exists finite. The number $\ell$ is called the {\sl dilation} of $G$ at $x$.

The following result shows the relations among the various objects introduced so far. The statements are taken from  \cite[Theorem 1]{CDP}, \cite[Theorem 2]{CDP2} (except the first claim about boundary regular points  which is essentially in \cite[pag. 255]{Siskakis-tesis}), see also \cite{ES}.

\begin{proposition}\label{bsem}
Let $(\phi_t)$ be a semigroup of holomorphic self-maps of $\D$ with infinitesimal generator $G$. Let $x\in \de \D$. If $x$ is a boundary (regular) fixed point for $\phi_{t_0}$ for some $t_0>0$ then it is a boundary (regular) fixed point for $\phi_t$ for all $t\geq 0$. Moreover, the following are equivalent:
\begin{enumerate}
  \item $x$ is a boundary regular fixed point for $(\phi_{t})$  and the   boundary dilation coefficient of $\phi_t$ at $x$ is $e^{\beta t}$ for some $\beta>0$,
  \item $x$ is a boundary regular null point for $G$ with dilation $\beta>0$.
\end{enumerate}
\end{proposition}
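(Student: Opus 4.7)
The plan is to first establish persistence of the (regular) boundary fixed point property across all $\phi_t$, $t > 0$, and then to prove the equivalence (1) $\Leftrightarrow$ (2).

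For persistence, fix $t_0 > 0$ and suppose $x \in \partial \D$ is a boundary (regular) fixed point of $\phi_{t_0}$. For $s \in (0, t_0)$ the factorization $\phi_{t_0} = \phi_{t_0 - s} \circ \phi_s$ together with Lemma \ref{L1} shows that $x$ is a (regular) contact point of $\phi_s$, that $\phi_s(x) \in \partial \D$, and that $\phi_s(x)$ is a (regular) contact point of $\phi_{t_0 - s}$ with $\phi_{t_0 - s}(\phi_s(x)) = x$; for $s \geq t_0$ one applies the same argument to $\phi_{nt_0} = \phi_{nt_0 - s} \circ \phi_s$ with $n$ chosen so that $nt_0 > s$. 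Regularity propagates thanks to the multiplicativity of the boundary dilation coefficient provided by Lemma \ref{L1}. To conclude $\phi_s(x) = x$, I would use the commutativity $\phi_{t_0} \circ \phi_s = \phi_s \circ \phi_{t_0}$ to get $\phi_{t_0}(\phi_s(x)) = \phi_s(\phi_{t_0}(x)) = \phi_s(x)$, so $y(s) := \phi_s(x)$ is itself a boundary (regular) fixed point of $\phi_{t_0}$ for every $s$. Since the boundary regular fixed points of $\phi_{t_0}$ with bounded dilation form a finite set, and $s \mapsto y(s)$ is continuous with $y(0) = x$, one has $y(s) \equiv x$ in a neighborhood of $0$, and the semigroup identity $\phi_{ns}(x) = (\phi_s)^{\circ n}(x)$ extends this to all $s \geq 0$.

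For (1) $\Rightarrow$ (2), assume $x$ is a boundary regular fixed point of $(\phi_t)$ with boundary dilation coefficient $e^{\beta t}$. Theorem \ref{JWC} gives
\[
\angle \lim_{z \to x} \frac{\phi_t(z) - x}{z - x} = e^{\beta t} \qquad \text{for each } t \geq 0.
\]
Since $G(z) = \partial_t \phi_t(z)|_{t = 0}$, the identity $\angle \lim_{z \to x} G(z)/(z - x) = \beta$ should follow by differentiating in $t$ at $t = 0$ and exchanging the angular limit with the derivative. I would justify the exchange via the real-analytic dependence of $\phi_t$ on $t$ together with the Julia-Wolff-Carath\'eodory estimates, which give uniformity of the $t$-convergence on suitable non-tangential approach regions.

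For (2) $\Rightarrow$ (1), assume $\angle \lim_{z \to x} G(z)/(z - x) = \beta > 0$. The Berkson-Porta formula $G(z) = (z - \tau)(\overline{\tau} z - 1) p(z)$ with $\Re p \geq 0$ forces $x \neq \tau$ and translates the hypothesis into a precise angular limit behavior for $p$ at $x$. I would then integrate the ODE $\dot z = G(z)$ along paths approaching $x$ non-tangentially; the linearization $\dot w \approx \beta w$ with $w = z - x$, together with Gronwall-type estimates and Julia-lemma preservation of horodisks, yields $\phi_t(z) - x \sim e^{\beta t}(z - x)$ in the angular sense, from which both $\phi_t(x) = x$ and $\alpha_{\phi_t}(x) = e^{\beta t}$ follow. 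The main obstacle is exactly this implication: transferring an angular-limit hypothesis on $G$ into angular behavior of the flow at $x$. The vanishing of $G$ at a boundary point does not by itself make that point a fixed point of the flow, and controlling the angular derivative of $\phi_t$ from the corresponding hypothesis on $G$ requires delicate Julia-lemma estimates ensuring that non-tangential approach regions are preserved along orbits; this is where the Berkson-Porta factorization does the essential work, and where the detailed arguments in the cited references are needed.
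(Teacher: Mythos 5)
First, note that the paper does not prove this proposition at all: it is imported from \cite[Theorem 1]{CDP}, \cite[Theorem 2]{CDP2}, \cite[pag.~255]{Siskakis-tesis} and \cite{ES}, so your attempt has to be measured against the proofs in those references rather than against anything in this text. Your outline identifies the right structure, but at each of the three stages the step that carries the actual mathematical weight is asserted rather than proved. In the persistence argument, the conclusion $y(s)\equiv x$ rests on the continuity of $s\mapsto \phi_s(x)$ for a \emph{boundary} point $x$; since $\phi_s(x)$ is only defined as a non-tangential limit, this continuity is itself a nontrivial theorem (it is part of what \cite{CD} and \cite{CDP2} establish), and without it the finiteness of the set of boundary regular fixed points with dilation $\leq\al_{\phi_{t_0}}(x)$ does not force $y$ to be constant. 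Moreover, for the non-regular half of the persistence claim your use of commutativity, $\phi_{t_0}(\phi_s(x))=\phi_s(\phi_{t_0}(x))$, requires composing angular limits along the curve $r\mapsto\phi_{t_0}(rx)$; Lemma~\ref{L1} only guarantees that this curve lands at $x$, not that it does so non-tangentially, and a merely tangential approach does not allow you to evaluate $\phi_s$ along it. This is precisely why the non-regular case in \cite{CDP2} needs a separate, more delicate argument.

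For the equivalence, the implication (1)$\Rightarrow$(2) is reduced to ``differentiate $\angle\lim_{z\to x}(\phi_t(z)-x)/(z-x)=e^{\beta t}$ in $t$ and exchange the limits,'' with the exchange justified by an appeal to uniformity that is not actually supplied; that interchange \emph{is} the theorem, and the Julia--Wolff--Carath\'eodory estimates of Theorem~\ref{JWC} do not by themselves give uniform-in-$t$ control of the difference quotients on Stolz angles. Similarly, (2)$\Rightarrow$(1) is left as a Gronwall/Julia heuristic which you yourself flag as the main obstacle. So the proposal is an honest road map with the hard analysis missing, not a proof. A route that closes these gaps more cleanly --- and the one implicit in this paper via Proposition~\ref{Unival-VectorField} and Lemma~\ref{angulo} --- is to pass to the K\"onigs function: $x$ is a boundary regular fixed point of $\phi_{t_0}$ with dilation $e^{\beta t_0}$ exactly when $h(\D)$ contains a maximal sector (if $\tau\in\D$) or strip (if $\tau\in\de\D$) of the corresponding aperture or width in the appropriate direction. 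That geometric condition is visibly independent of $t_0$, which yields persistence and the exponential form of the dilation simultaneously, and the identity $h'G=G'(\tau)h$ (resp.\ $h'G=1$), combined with Lemma~\ref{Co-Po}, converts it into the statement that $x$ is a boundary regular null point of $G$ with dilation $\beta$.
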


To any semigroup of holomorphic self-maps of the unit disc is associated a (unique) intertwining map which simultaneously linearizes the semigroup. The ideas for the proof of the following result are in \cite{H}, and, with different methods in \cite{Berkson-Porta} and \cite{Siskakis-tesis} (see also \cite[Chapter 1.4]{Abate}).

\begin{proposition}
\label{Unival-VectorField} Let $(\phi_{t})$ be a non-trivial
semigroup in $\mathbb{D}$ with infinitesimal generator $G$.
Then there exists a unique univalent function $h:\D\to\C$,
called the {\sl K\"onigs function} of $(\phi_t)$, such that
\begin{enumerate}
\item If $(\phi_{t})$ has Denjoy-Wolff point $\tau \in \mathbb{D}$ then
$h(\tau)=0$, $h'(\tau)=1$ and
$h(\phi_t(z))=e^{G'(\tau)t}h(z)$ for all $t\geq 0$.
Moreover, $h$ is the unique holomorphic function from
$\mathbb{D}$ into $\mathbb{C}$ such that
\begin{enumerate}
\item[(i)] $h^{\prime }(z)\neq 0,$ for every $z\in \mathbb{D},$
\item[(ii)] $h(\tau )=0$ and $h^{\prime }(\tau )=1,$
\item[(iii)] $h^{\prime }(z)G(z)=G^{\prime }(\tau )h(z),$ for
every $z\in \mathbb{D}.$
\end{enumerate}
\item If $(\phi_{t})$ has Denjoy-Wolff point $\tau
\in \partial \mathbb{D}$ then $h(0)=0$ and
$h(\phi_t(z))=h(z)+t$ for all $t\geq 0$. Moreover, $h$ is
the unique holomorphic function from $\mathbb{D}$ into
$\mathbb{C}$ such  that:
\begin{enumerate}
\item[(i)] $h(0)=0,$
\item[(ii)] $h^{\prime }(z)G(z)=1,$ for every $z\in \mathbb{D}.$
\end{enumerate}
\end{enumerate}
\end{proposition}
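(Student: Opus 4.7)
I would prove each part separately since the interior and boundary Denjoy--Wolff cases require different constructions. In both cases the uniqueness clauses reduce to uniqueness for elementary ODEs; the substantive content is existence, together with univalence in the boundary case.

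\textbf{Case 1 ($\tau\in\D$).} The plan is to linearize the time-one map $\phi:=\phi_1$ and then transfer the linearization to the whole semigroup by commutativity. From the Berkson--Porta formula and the minimum principle applied to $\Re p\ge 0$ one gets $G'(\tau)=-(1-|\tau|^2)p(\tau)\ne 0$, so $\phi$ has an interior fixed point at $\tau$ with multiplier $\phi'(\tau)=e^{G'(\tau)}$ of modulus at most $1$. In the generic case $\Re G'(\tau)<0$ the classical Koenigs linearization theorem supplies a unique univalent $h:\D\to\C$ with $h(\tau)=0$, $h'(\tau)=1$ and $h\circ\phi=\phi'(\tau)h$; in the indifferent elliptic case the same conclusion holds with the explicit disc-automorphism fixing $\tau$. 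To propagate from $\phi$ to the full semigroup I set
\[
h_t(z):=e^{-G'(\tau)t}\,h(\phi_t(z))
\]
and use $\phi_t\circ\phi=\phi\circ\phi_t$ to show that $h_t$ again linearizes $\phi$ with $h_t(\tau)=0$ and $h_t'(\tau)=1$; Koenigs uniqueness then forces $h_t\equiv h$, giving $h(\phi_t(z))=e^{G'(\tau)t}h(z)$. Differentiating at $t=0$ yields (iii). Uniqueness of $h$ subject to (i)--(iii) follows from uniqueness for this first-order linear ODE prescribed at the regular singular point $\tau$.

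\textbf{Case 2 ($\tau\in\partial\D$).} Any interior zero of $G$ would be a common fixed point of every $\phi_t$ and hence the Denjoy--Wolff point, contradicting $\tau\in\partial\D$; therefore $G$ is nowhere zero on $\D$. Since $\D$ is simply connected, the holomorphic primitive
\[
h(z):=\int_0^z\frac{d\zeta}{G(\zeta)}
\]
is well defined and satisfies $h(0)=0$ and $h'G\equiv 1$. The intertwining relation follows from
\[
\frac{d}{dt}\bigl(h(\phi_t(z))-t\bigr)\;=\;h'(\phi_t(z))G(\phi_t(z))-1\;\equiv\;0,
\]
and uniqueness under (i)--(ii) is immediate from uniqueness of primitives.

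The principal obstacle is \emph{global univalence} of $h$ in Case~2: local injectivity is automatic from $h'\ne 0$, but global injectivity requires a genuine argument. My plan is to assume $h(a)=h(b)$ with $a\ne b$, use local inverses of $h$ to produce a biholomorphism $\sigma$ between small neighborhoods of $a$ and $b$ preserving $G$, and then extend $\sigma$ along the semigroup flow to derive a conflict with the uniqueness of the Denjoy--Wolff point. A conceptually cleaner alternative, which I would keep as a backup, is to construct $h$ first as a renormalized limit of $\phi_t$ precomposed with Möbius transformations sending $\tau$ to $\infty$; univalence is then inherited from the univalence of each $\phi_t$, and the identification with the integral primitive above is enforced by the uniqueness clause in (i)--(ii).
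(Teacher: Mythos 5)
The paper itself does not prove this proposition; it is stated with references to Heins, Berkson--Porta, Siskakis and Abate, so your proposal has to be judged against the standard arguments. Your Case 1 is essentially that standard route and is sound: $G'(\tau)=-(1-|\tau|^2)p(\tau)\neq 0$ for a non-trivial semigroup, Koenigs' theorem linearizes $\phi_1$ when $\Re G'(\tau)<0$, the rotation-group case $\Re p(\tau)=0$ (where $p\equiv ic$ by the minimum principle, and where $\phi_1$ may be the identity so Koenigs would not apply anyway) is handled by an explicit M\"obius map, and the commutation trick $h_t=e^{-G'(\tau)t}\,h\circ\phi_t$ together with Koenigs uniqueness propagates the linearization to all $t$. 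The uniqueness under (i)--(iii) is indeed just the observation that two solutions of $h'G=G'(\tau)h$ have locally constant ratio on the connected set $\D\setminus\{\tau\}$.

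The genuine gap is the univalence of $h(z)=\int_0^z G^{-1}$ in Case 2, which you rightly call the principal obstacle but do not close. Your primary plan --- from $h(a)=h(b)$ build a local biholomorphism $\sigma$ with $\sigma(a)=b$ and $G\circ\sigma=\sigma'\cdot G$, then extend it along the flow --- does yield a well-defined self-conjugacy of the flow on the flow-out of a neighbourhood of $a$, but it produces no visible contradiction with the uniqueness of the Denjoy--Wolff point: both orbits $\phi_t(a)$ and $\phi_t(b)$ simply tend to $\tau$, and nothing in the argument so far excludes $h$ being two-to-one with $\sigma$ acting as a deck transformation. As written, this argument does not terminate. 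Two standard ways to finish are available. One is the renormalized-limit construction you relegate to a backup (the Berkson--Porta/Siskakis approach), where univalence is inherited from the univalence of each $\phi_t$; this must actually be carried out, including the convergence of the renormalized iterates. The other, better matched to your primitive construction, is a Noshiro--Warschawski argument: since $(z-\tau)(\overline{\tau}z-1)=\overline{\tau}(\tau-z)^2$, the Berkson--Porta formula gives
\[
\Re\big[\overline{\tau}(\tau-z)^2h'(z)\big]=\Re\big[1/p(z)\big]\ge 0,
\]
so composing $h$ with the Cayley map of the right half-plane onto $\D$ sending $\infty$ to $\tau$ produces a function $H$ with $\Re H'\ge 0$ on a convex domain, hence univalent (the degenerate case $\Re H'\equiv 0$ forces $H$ affine). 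Without one of these completions the proposition is not proved.
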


Boundary regular fixed points of semigroups can be detected by looking at the geometry of the image of the associated K\"onigs function. The proof of the following lemma is essentially contained in \cite[Theorem 2.6]{CD} and \cite[Lemma 5]{ESZ}.

\begin{lemma}\label{angulo}
Let $\Omega\subset \C$ be a domain star-like with respect to $0$. Let $h:\D\to \Omega$ be the Riemann map such that $h(0)=0, h'(0)>0$. For $\eta\in  [0,1)$ and $\al\in (0,1/2)$ let
$V_\eta(\al):=\{z\in \C : z=s e^{2\pi i\theta}: s\in [0,+\infty), \theta\in (\eta-\al,\eta+\al)\}$.

Assume there exists $\eta_0\in  [0,1)$ and $\al_0\in (0,1/2)$ such that $V_{\eta_0}(\al_0)\subset \Omega$. Then
$x:=\lim_{\R \ni r\to +\infty} h^{-1}(re^{2\pi i \eta})\in \de \D$ is a boundary regular fixed point of
the semigroup $\phi_t(z):=h^{-1}(e^{-t}h(z))$ with dilation  $\beta$ given by
\[
  \beta=\sup \{\al : V_\eta(\al)\subset \Omega \hbox{ and } V_{\eta_0}(\al_0)\subset V_\eta(\al)\}.
\]
\end{lemma}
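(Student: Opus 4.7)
The plan is to verify three things in sequence: the limit $x$ exists in $\de\D$; $x$ is a boundary fixed point of the semigroup $\phi_t$; and $x$ is regular with the stated dilation. The preliminary observation is that since $\Omega$ is star-like with respect to $0$, the map $w\mapsto e^{-t}w$ sends $\Omega$ into itself for every $t\geq 0$, so $\phi_t(z)=h^{-1}(e^{-t}h(z))$ is a well-defined semigroup of self-maps of $\D$, whose Denjoy--Wolff point is $0\in\D$ (because $h(0)=0$).

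For the existence of $x$, note that the ray $r\mapsto re^{2\pi i\eta_0}$ lies inside $V_{\eta_0}(\alpha_0)\subset\Omega$ and leaves every compact subset of $\Omega$ as $r\to +\infty$, so the pre-image curve $\sigma(r):=h^{-1}(re^{2\pi i\eta_0})$ satisfies $|\sigma(r)|\to 1$. To show its cluster set on $\de\D$ reduces to a single point, I would adapt the Lindel\"of-type argument used in Lemma~\ref{L1}: if the cluster set were a non-degenerate arc, then by Fatou's theorem $h$ would have radial limit $\infty$ on a set of positive measure, contradicting the fact that $h$, being univalent, has finite non-tangential limits almost everywhere. The boundary fixed point property then follows similarly: if $z$ tends to $x$ inside a Stolz region, the sector hypothesis ensures $h(z)\to\infty$ in direction $\eta_0$, hence so does $e^{-t}h(z)$, whence $\phi_t(z)=h^{-1}(e^{-t}h(z))\to x$.

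The crux is the dilation computation. Using the Julia--Wolff--Carath\'eodory Theorem~\ref{JWC},
\[
\alpha_{\phi_t}(x)=\angle\lim_{z\to x}\left|\frac{x-\phi_t(z)}{x-z}\right|,
\]
so after substituting the definition of $\phi_t$ the problem reduces to the asymptotic behavior of $h$ and $h^{-1}$ near the ``corner'' at $x$ on the one side and at infinity in direction $\eta_0$ on the other. The geometric input is that $\beta$ is exactly the half-width of the largest centered sector fitting inside $\Omega$ at infinity in that direction. For each $\alpha<\beta$, the inclusion $V_{\eta_0}(\alpha)\subset\Omega$ allows one to subordinate $(\phi_t)$ by the explicit semigroup on the sector (whose Riemann map from a half-plane is a power function), yielding one inequality on the dilation via Schwarz--Pick applied to the natural map between the two K\"onigs functions. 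The reverse inequality follows from the failure of $V_{\eta_0}(\alpha)\subset\Omega$ for $\alpha>\beta$, via a complementary conformal comparison. Matching the two sides pins the value of the dilation to $\beta$. The main obstacle will be tracking these estimates precisely when $\Omega$ is only required to contain, not coincide with, a sector at infinity; the cleanest route, indicated by the references \cite[Theorem~2.6]{CD} and \cite[Lemma~5]{ESZ}, is to translate the statement into the known equivalence between boundary regular fixed points of $(\phi_t)$ with interior Denjoy--Wolff point and maximal sectorial openings of the K\"onigs image.
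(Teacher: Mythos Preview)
The paper does not actually prove this lemma; it simply states that the proof ``is essentially contained in \cite[Theorem~2.6]{CD} and \cite[Lemma~5]{ESZ}.'' Your proposal ultimately lands in the same place, invoking those references for the core equivalence between boundary regular fixed points and maximal sectorial openings of the K\"onigs image, so at the level of strategy you are aligned with the paper.

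That said, your intermediate step for the boundary fixed-point property has a gap. You write that ``if $z$ tends to $x$ inside a Stolz region, the sector hypothesis ensures $h(z)\to\infty$ in direction $\eta_0$,'' but the sector hypothesis $V_{\eta_0}(\alpha_0)\subset\Omega$ gives no direct control over the \emph{non-tangential} boundary behavior of $h$ at $x$; you only know $h$ along the single curve $\sigma(r)=h^{-1}(re^{2\pi i\eta_0})$. Proving $\angle\lim_{z\to x}h(z)=\infty$ is in fact the harder direction (it is part of what the cited results establish). The cheap way around this is to work with $\phi_t$ rather than $h$: observe that $\phi_t(\sigma(r))=h^{-1}(e^{-t}re^{2\pi i\eta_0})=\sigma(e^{-t}r)$, so the curve $\sigma$ is $\phi_t$-invariant and $\phi_t(\sigma(r))\to x$ as $r\to\infty$. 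Since $\phi_t$ is bounded, Lindel\"of's theorem gives $\angle\lim_{z\to x}\phi_t(z)=x$ directly, without ever needing the non-tangential behavior of $h$.

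For the dilation, your subordination sketch (compare with the explicit power-map semigroup on the sector, then pass to the sup) is the right shape, but as written it is only a heuristic; the actual work---showing the comparison maps land non-tangentially and that the Julia--Wolff--Carath\'eodory quotients behave as claimed under composition---is precisely what \cite[Theorem~2.6]{CD} and \cite[Lemma~5]{ESZ} supply. If you intend a self-contained argument, that is where the details must go.
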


\section{Regular poles and $\beta$-points}\label{3}

\subsection{Regular poles}\label{31}
\begin{definition}
Let $G\in \Gen$.  A point $x\in \de \D$ is a {\sl  regular pole of $G$ of  mass $C>0$} if
\[
\angle\liminf_{z\to x} |G(z)(x-z)|= C.
\]
We denote by $\Pl_C(G)$ the set of  regular poles of $G$ of  mass $C$. Moreover, we let
\[
\Pl(G):=\cup_{C>0} \Pl_C(G)
\]
be the set of {\sl regular poles} of $G$.
\end{definition}

As we see, infinitesimal generators behave well at regular poles, essentially as a consequence of  Julia's lemma, whose following version  was first proved in \cite[Lemma 4.0]{CP} (see also \cite[Lemma 4.2]{EST}):

\begin{lemma}[Cowen-Pommerenke]\label{Co-Po}
Let $p:\D \to \C$ be holomorphic and $\Re p(z)\geq 0$ for all $z\in \D$. Then for all $x\in\de \D$ the following limit exists
\[
\angle\lim_{z\to x} \frac{1}{2}p(z)(1-\overline{x}z)=L \in [0,+\infty).
\]
Moreover, the function $h(z):=p(z)-L (x+z)/(x-z)$ is such that $\Re h(z)\geq 0$.
\end{lemma}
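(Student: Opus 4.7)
The plan is to invoke the Herglotz representation theorem for holomorphic functions with non-negative real part on $\D$, which yields a (unique) real constant $c$ and a finite positive Borel measure $\mu$ on $\partial\D$ such that
\[
p(z) = ic + \int_{\partial\D} \frac{\zeta+z}{\zeta-z}\, d\mu(\zeta), \qquad z\in\D.
\]
I would then set $L := \mu(\{x\})\in[0,+\infty)$ and split $\mu = L\delta_x + \nu$, where $\nu$ is a positive Borel measure with $\nu(\{x\})=0$. Plugging this into $\frac{1}{2}p(z)(1-\overline{x}z)$ gives three pieces: the constant term $\frac{ic}{2}(1-\overline{x}z)$, the atomic contribution $\frac{L}{2}\cdot\frac{(x+z)(1-\overline{x}z)}{x-z}$, and the residual integral $\frac{1}{2}\int_{\partial\D}\frac{(\zeta+z)(1-\overline{x}z)}{\zeta-z}\,d\nu(\zeta)$.

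The first piece tends to $0$ trivially. For the atomic contribution I would use the elementary identity $1-\overline{x}z = \overline{x}(x-z)$ to cancel the apparent singularity, obtaining $\frac{L}{2}\overline{x}(x+z)$, which tends to $\frac{L}{2}\overline{x}\cdot 2x = L$ as $z\to x$. This already identifies the candidate limit as $L = \mu(\{x\})\in[0,+\infty)$.

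The main (but mild) obstacle is controlling the residual integral. For $z$ in a Stolz angle at $x$ there exists $M>0$ with $|1-\overline{x}z|\leq M(1-|z|)$, while for every $\zeta\in\partial\D$ one has $|\zeta-z|\geq 1-|z|$. Hence
\[
\left|\frac{(\zeta+z)(1-\overline{x}z)}{\zeta-z}\right|\leq 2M
\]
uniformly in $\zeta\in\partial\D$ and $z$ in the Stolz angle. Since the integrand converges pointwise to $0$ on $\partial\D\setminus\{x\}$ as $z\to x$, and $\nu(\{x\})=0$, dominated convergence against the finite measure $\nu$ gives that the residual integral tends to $0$. Combining the three pieces yields $\angle\lim_{z\to x}\frac{1}{2}p(z)(1-\overline{x}z)=L$.

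Finally, for the moreover part, I would observe that $L\,\frac{x+z}{x-z}$ is exactly the Herglotz contribution of the atom $L\delta_x$, so subtracting it leaves
\[
h(z) = p(z) - L\,\frac{x+z}{x-z} = ic + \int_{\partial\D}\frac{\zeta+z}{\zeta-z}\, d\nu(\zeta).
\]
Since $\Re\frac{\zeta+z}{\zeta-z} = (1-|z|^2)/|\zeta-z|^2\geq 0$ and $\nu$ is a positive measure, it follows that $\Re h(z)\geq 0$, concluding the proof.
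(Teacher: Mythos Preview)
Your proof is correct. The paper does not supply its own proof of this lemma; it simply attributes the result to Cowen and Pommerenke \cite[Lemma 4.0]{CP} (see also \cite[Lemma 4.2]{EST}) and states it without argument. Your Herglotz-representation approach---identifying $L$ as the atomic mass $\mu(\{x\})$, using the identity $1-\overline{x}z=\overline{x}(x-z)$ to compute the atomic contribution, and bounding the residual integrand uniformly in a Stolz angle to apply dominated convergence---is the standard and arguably most natural route to this statement, and the ``moreover'' part then falls out immediately since subtracting the atom leaves a Herglotz integral against the positive residual measure $\nu$.
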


Berkson-Porta's formula and  Lemma \ref{Co-Po} imply:

\begin{lemma}\label{exists}
If $G\in \Gen$, then for all $x\in \de \D$  the non-tangential limit
\begin{equation*}
\angle\lim_{z\to x} G(z)(x-z)=a
\end{equation*}
exists, with $|a|\in [0,+\infty)$.
\end{lemma}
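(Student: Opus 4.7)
The plan is to reduce the statement to the Cowen--Pommerenke Lemma \ref{Co-Po} via the Berkson--Porta representation. By hypothesis $G \in \Gen$, so there exist $\tau \in \oD$ and a holomorphic $p : \D \to \C$ with $\Re p \geq 0$ such that
\[
G(z) = (z - \tau)(\overline{\tau} z - 1)\, p(z).
\]
Hence
\[
G(z)(x - z) = (z - \tau)(\overline{\tau} z - 1)(x - z)\, p(z),
\]
and the aim is to show that the non-tangential limit of the right-hand side as $z \to x$ exists in $\C$.

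The key algebraic observation I would use is that, since $|x| = 1$,
\[
x - z = x(1 - \overline{x} z).
\]
Substituting this in gives
\[
G(z)(x - z) = 2 x\,(z - \tau)(\overline{\tau} z - 1)\cdot \frac{1}{2} p(z)(1 - \overline{x} z).
\]
The two polynomial factors $(z - \tau)$ and $(\overline{\tau} z - 1)$ extend continuously to $\oD$, so as $z \to x$ they tend to the finite values $(x - \tau)$ and $(\overline{\tau} x - 1)$, respectively. The remaining factor $\tfrac{1}{2} p(z)(1 - \overline{x} z)$ is precisely the one handled by Lemma \ref{Co-Po}, which guarantees that its non-tangential limit at $x$ exists and equals some $L \in [0, +\infty)$.

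Combining these, the non-tangential limit exists and equals
\[
a = 2 x (x - \tau)(\overline{\tau} x - 1)\, L,
\]
which lies in $\C$ and in particular has $|a| \in [0, +\infty)$. (Using again $|x| = 1$, one may notice $|\overline{\tau} x - 1| = |x - \tau|$, so $|a| = 2|x-\tau|^{2} L$, which is $0$ precisely when $\tau = x$ or $L = 0$.) There is essentially no obstacle here: the only nontrivial input is Lemma \ref{Co-Po}, and the rest is the elementary identity $x - z = x(1 - \overline{x} z)$ that makes the Berkson--Porta factorization line up with the Herglotz-type factor controlled by Cowen--Pommerenke.
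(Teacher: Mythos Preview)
Your proof is correct and follows exactly the approach the paper indicates: the paper simply states that the lemma is implied by the Berkson--Porta formula together with Lemma~\ref{Co-Po}, and your write-up spells out precisely this implication via the identity $x-z=x(1-\overline{x}z)$. The only trivial omission is the degenerate case $G\equiv 0$, where the limit is obviously $0$.
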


\begin{remark}
By Lemma \ref{exists}, if $G\in \Gen$ then
\[
\Pl_C(G)=\{x\in \de \D : \angle\lim_{z\to x} |G(z)(x-z)|=C\}.
\]
\end{remark}

Note that if $x\in \de \D$ is a regular pole then
\[
\angle \lim_{z\to x} |G(z)|=\infty.
\]

Along the lines of the Cowen-Pommerenke inequalities \cite{CP} (see also \cite{EST}) we prove the following result:

\begin{proposition}
Let $G\in \Gen$ be given by the Berkson-Porta formula $G(z)=(\tau-z)(1-\overline{\tau}z)p(z)$ for some $\tau\in \oD$ and $p:\D \to \C$ holomorphic with $\Re p(z)\geq 0$ for all $z\in \D$. Let $A_j>0$, $j=1,\ldots, m$. Let $x_j\in \Pl_{A_j}(G)$ for $j=1,\ldots, m$. Then
\[
\sum_{j=1}^m \frac{A_j}{2|x_j-\tau|^2}\leq \Re p(0).
\]
\end{proposition}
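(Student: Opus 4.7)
The plan is to translate the pole hypothesis into a condition on the Berkson--Porta factor $p$, and then iterate Lemma \ref{Co-Po} to peel off one pole at a time, ending by evaluating a nonnegative harmonic quantity at the origin.

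First I would rewrite, using $|1 - \overline{\tau}z| \to |x_j - \tau|$ as $z \to x_j \in \de \D$ and the identity $|1 - \overline{x_j}z| = |x_j - z|$ valid for $|x_j| = 1$, the Berkson--Porta factorization as
\[
|G(z)(x_j - z)| = |\tau - z|\,|1 - \overline{\tau}z|\,|p(z)(x_j - z)|.
\]
By Lemma \ref{Co-Po} applied to $p$ at $x_j$, the angular limit
\[
L_j := \angle\lim_{z \to x_j} \tfrac{1}{2}\,p(z)(1 - \overline{x_j}z)
\]
exists in $[0,+\infty)$. Combining this with the hypothesis $x_j \in \Pl_{A_j}(G)$ gives
\[
L_j = \frac{A_j}{2|x_j - \tau|^2}.
\]

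Next I would set $p^{(0)} := p$ and, for $k = 1,\ldots,m$, define inductively
\[
p^{(k)}(z) := p^{(k-1)}(z) - L_k\,\frac{x_k + z}{x_k - z}.
\]
By Lemma \ref{Co-Po}, each $p^{(k)}$ again satisfies $\Re p^{(k)}(z) \geq 0$ on $\D$, provided that at each step the angular limit of $\tfrac{1}{2}\,p^{(k-1)}(z)(1 - \overline{x_k}z)$ at $x_k$ equals exactly $L_k$. For $k=1$ this was just verified. For $k > 1$, each previously subtracted term $L_\ell (x_\ell + z)/(x_\ell - z)$ with $\ell < k$ is bounded near $x_k$ because $x_\ell \neq x_k$; multiplying such a bounded factor by $(1 - \overline{x_k}z) \to 0$ contributes nothing to the angular limit at $x_k$. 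Hence the angular limit of $\tfrac{1}{2}\,p^{(k-1)}(z)(1 - \overline{x_k}z)$ at $x_k$ coincides with that of $\tfrac{1}{2}\,p(z)(1 - \overline{x_k}z)$, namely $L_k$, and the induction proceeds.

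Finally, since $(x_k + 0)/(x_k - 0) = 1$, evaluating at $z = 0$ yields
\[
\Re p^{(m)}(0) = \Re p(0) - \sum_{k=1}^{m} L_k \geq 0,
\]
which is exactly the claimed inequality.

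The only subtle point in the argument is the inductive verification that the relevant angular limit at $x_k$ is preserved after each subtraction; this is precisely where the mutual distinctness of the poles $x_j$ enters (ensuring the correction terms stay bounded at the other poles), but it is a routine non-tangential computation rather than a genuine obstacle.
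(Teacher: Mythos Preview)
Your proof is correct and follows essentially the same route as the paper's. The only cosmetic difference is that the paper subtracts all $m$ Cayley kernels from $p$ in a single stroke and asserts directly that the resulting function has nonnegative real part, whereas you spell out the iterative application of Lemma~\ref{Co-Po} that justifies this; the relation $A_j = 2|x_j - \tau|^2 L_j$ and the evaluation at $z=0$ are identical in both arguments.
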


\begin{proof} We can suppose that $p$ is not constant.
For $j=1,\ldots, m$, let
\[
L_j:=\lim_{(0,1)\ni r\to 1} \frac{1-r}{2}p(rx_j).
\]
By Lemma \ref{Co-Po} such a limit exists finite and non-negative and moreover the function
\[
h(z):=p(z)-\sum_{j=1}^m L_j \frac{x_j+z}{x_j-z}
\]
is holomorphic in $\D$ and $\Re h(z)\geq 0$ for all $z\in \D$. From $\Re h(0)\geq 0$, taking into account that
\[
A_j=\lim_{(0,1)\ni r\to 1}|G(rx_j)|(1-r)=2|x_j-\tau|^2L_j,
\]
we have the inequality.
\end{proof}

A direct consequence of the previous proposition is the following:

\begin{corollary}\label{finito}
Let $G\in \Gen$. Then for all $C>0$
\[
\sharp \left(\cup_{D\geq C}\Pl_D(G)\right)<+\infty.
\]
\end{corollary}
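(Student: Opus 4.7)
The plan is to deduce Corollary \ref{finito} as an immediate quantitative consequence of the preceding proposition, essentially by a pigeonhole/contradiction argument on the finitely many slots available in the Cowen--Pommerenke type inequality.

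First I would write $G$ in Berkson--Porta form $G(z)=(\tau-z)(1-\overline{\tau}z)p(z)$ and dispose of the degenerate case: if $p\equiv c$ is constant, then $G$ extends holomorphically to $\overline{\D}$, so $|G|$ is bounded on $\D$ and $\Pl(G)=\emptyset$; in particular $\cup_{D\geq C}\Pl_D(G)=\emptyset$ and we are done. Hence I may assume $p$ is non-constant, so that the proposition applies literally.

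Next, suppose for contradiction that $\cup_{D\geq C}\Pl_D(G)$ is infinite. Pick $m$ distinct points $x_1,\ldots,x_m$ in this union, with corresponding masses $A_j\geq C$. Since $x_j\in\de\D$ and $\tau\in\overline{\D}$, the trivial estimate $|x_j-\tau|\leq 2$ gives
\[
\frac{A_j}{2|x_j-\tau|^2}\geq \frac{A_j}{8}\geq \frac{C}{8}.
\]
Plugging into the inequality provided by the proposition yields $\tfrac{mC}{8}\leq \Re p(0)$, hence
\[
m\leq \frac{8\,\Re p(0)}{C}.
\]
Since $\Re p(0)<+\infty$ is a fixed finite quantity depending only on $G$, this bounds $m$, contradicting the assumption that we could take $m$ arbitrarily large. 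Therefore $\cup_{D\geq C}\Pl_D(G)$ is finite.

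There is essentially no obstacle: the entire content is packaged inside the proposition, and the corollary is a routine extraction. The only small subtlety worth writing carefully is the bookkeeping of the constant case of $p$, and the observation that $|x_j-\tau|\leq 2$ provides a uniform lower bound on the summands independent of the location of the poles on $\de\D$; without such a uniform lower bound the argument would not close, and this is what forces finiteness of the set at each fixed mass level $C$ (while still allowing infinitely many poles of arbitrarily small positive mass to coexist).
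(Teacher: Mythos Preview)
Your proposal is correct and follows exactly the route the paper intends: the paper states the corollary as ``a direct consequence of the previous proposition'' without further detail, and your argument is precisely the natural way to extract that consequence, using the uniform bound $|x_j-\tau|\leq 2$ to get a positive lower bound $C/8$ on each summand. Your handling of the constant case of $p$ is a reasonable extra bit of care (in that case $\Pl(G)=\emptyset$ anyway, so the proposition is vacuous rather than inapplicable).
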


\subsection{$\beta$-Points}\label{32} We start with some general facts. First, we recall the following (for a proof see, {\sl e.g.}, \cite[Theorem 10.5]{P}):

\begin{lemma}\label{second-der}
Let $f:\D \to \C$ be holomorphic and let $x\in \de\D$. The following are equivalent:
\begin{enumerate}
  \item $\angle\lim_{z\to x}f(z)=A\in \C$ and
  \[
  \angle\lim_{z\to x} \frac{f(z)-A}{z-x}=L\in \C.
  \]
  \item $\angle\lim_{z\to x} f'(z)=L\in \C$.
\end{enumerate}
\end{lemma}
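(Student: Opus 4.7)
The plan is to prove each implication by a Stolz--angle argument: (2) $\Rightarrow$ (1) by integrating $f'$ along chords ending at $x$, and (1) $\Rightarrow$ (2) by a Cauchy estimate on a small disc of radius proportional to $|x-z|$. Throughout I will use the elementary geometric facts that Stolz angles at $x$ are convex with vertex at $x$, and that, up to a slight enlargement of the opening, they are stable both under taking chords to $x$ and under taking discs of radius $\rho|x-z|$ around points $z$ in the angle (for sufficiently small $\rho$).

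To prove (2) $\Rightarrow$ (1), I would fix a Stolz angle $\Delta$ at $x$. The hypothesis $\angle\lim f'=L$ yields $|f'|\leq |L|+1$ on $\Delta\cap B(x,\delta)$ for some $\delta>0$. Since chords between points of this (convex) set stay inside it, $f$ is Lipschitz there, so $A:=\lim_{z\to x,\,z\in\Delta}f(z)$ exists; enlarging $\Delta$ shows $A$ is independent of the Stolz angle, i.e.\ $\angle\lim_{z\to x}f(z)=A$. Parametrizing the chord from $z\in\Delta$ to $x$ as $\gamma(s)=(1-s)z+sx$ for $s\in[0,1]$ gives
\[
\frac{f(z)-A}{z-x}=\int_{0}^{1}f'(\gamma(s))\,ds,
\]
and since every $\gamma(s)$ with $s<1$ approaches $x$ non-tangentially as $z\to x$ in $\Delta$, dominated convergence (against the uniform bound $|L|+1$) delivers the limit $L$.

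For (1) $\Rightarrow$ (2), I would set $\varphi(z):=f(z)-A-L(z-x)$, so that $\varphi(z)=(z-x)\epsilon(z)$ with $\angle\lim_{z\to x}\epsilon(z)=0$, and show $\varphi'(z)=f'(z)-L$ vanishes non-tangentially. For $z$ in a Stolz angle $\Delta$ at $x$, sufficiently close to $x$, choose $\rho=\rho(\Delta)\in(0,1)$ small enough that $B(z,\rho|x-z|)$ lies both in $\D$ and in a slightly wider Stolz angle $\Delta'$. Cauchy's formula gives
\[
f'(z)-L=\frac{1}{2\pi i}\oint_{|w-z|=\rho|x-z|}\frac{\varphi(w)}{(w-z)^{2}}\,dw,
\]
and, since $|\varphi(w)|\leq |w-x|\cdot|\epsilon(w)|\leq (1+\rho)|x-z|\cdot\max_{w}|\epsilon(w)|$ on the circle, the standard ML--estimate yields
\[
|f'(z)-L|\leq\frac{1+\rho}{\rho}\sup_{|w-z|=\rho|x-z|}|\epsilon(w)|.
\]
As $z\to x$ within $\Delta$, each $w$ on the circle tends to $x$ within $\Delta'$, so $\epsilon(w)\to 0$ uniformly and $f'(z)\to L$.

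The only mildly subtle point is (1) $\Rightarrow$ (2), where one has to upgrade a non-tangential limit of a difference quotient to a non-tangential limit of the derivative. Schwarz--Pick-type bounds on the scale $(1-|z|)^{-1}$ are insufficient, but the finer scale $|x-z|$ available inside a Stolz angle, combined with $\epsilon\to 0$ non-tangentially, is exactly what makes the Cauchy integral over $|w-z|=\rho|x-z|$ tend to zero.
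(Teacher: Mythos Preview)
Your argument is correct in both directions: the chord-integration for (2)$\Rightarrow$(1) and the Cauchy estimate on discs of radius $\rho|x-z|$ for (1)$\Rightarrow$(2) are exactly the standard mechanisms, and the geometric claims about Stolz angles (convexity, stability under small discs) that you invoke hold as stated. The only point worth tightening is the endpoint of the integral in (2)$\Rightarrow$(1): since $\gamma(1)=x\notin\D$, one should first integrate over $[0,s_0]$ and then let $s_0\to 1$, using the uniform bound on $f'$ along the chord; this is routine.

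As for comparison: the paper does not give its own proof of this lemma. It is stated as a known fact with a reference to Pommerenke's \emph{Univalent Functions} \cite[Theorem~10.5]{P}. Your write-up supplies a clean, self-contained proof along the classical lines one finds in that source, so there is nothing to compare in terms of approach.
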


In the sequel we will strongly  use the following fact:

\begin{lemma}\label{noboundary}
Let $f: \D \to \D$ be univalent. Let $x\in \de \D$. Suppose that
\[
\lim_{(0,1)\ni r\to 1}|f'(rx)|=0.
\]
 Then
there exists $\sigma\in \D$ such that $\angle\lim_{z\to x}f(z)=\sigma$.
\end{lemma}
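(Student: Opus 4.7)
The plan is a three-step reduction: first establish that the radial limit $\lim_{r\to 1^-} f(rx)$ exists, then rule out that this limit lies on $\de\D$, and finally upgrade radial convergence to non-tangential convergence.

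For the first step, I would exploit the hypothesis $|f'(rx)|\to 0$ by a direct Cauchy estimate. Writing
\[
f(r_2 x) - f(r_1 x) \;=\; x\int_{r_1}^{r_2} f'(sx)\,ds
\]
and choosing, for any given $\epsilon>0$, an $r_0\in(0,1)$ with $|f'(sx)|<\epsilon$ on $(r_0,1)$, one obtains $|f(r_2 x)-f(r_1 x)|\leq \epsilon(r_2-r_1)<\epsilon$ whenever $r_0<r_1<r_2<1$. This shows that $\{f(rx)\}$ is Cauchy as $r\to 1^-$, so the radial limit $\sigma:=\lim_{r\to 1^-}f(rx)\in\overline{\D}$ exists.

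For the second step, I would argue by contradiction. If $\sigma\in\de\D$, then $\limsup_{r\to 1^-}|f(rx)|=1$, so the form of the Julia--Wolff--Carath\'eodory theorem given in Theorem \ref{JWC} forces $\al_f(x)=\limsup_{r\to 1^-}|f'(rx)|=0$. But Julia's lemma (recalled at the start of Section \ref{boundaryreg}) guarantees $\al_f(x)>0$, a contradiction. Hence $\sigma\in\D$. For the third step, $f$ is bounded (with values in $\D$) and has a radial limit at $x$, so Lindel\"of's theorem---the same tool already invoked in the proof of Lemma \ref{L1}---yields $\angle\lim_{z\to x}f(z)=\sigma$.

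What might appear to be the main obstacle is actually a non-obstacle: one could fear that controlling the cluster behaviour of $f(rx)$ would require Koebe-type distortion estimates or a finer analysis of the shape of $f(\D)$ near $\de f(\D)\cap\D$, and indeed the univalence hypothesis is never used in the argument above---any bounded holomorphic $f:\D\to\D$ with $|f'(rx)|\to 0$ radially admits the same conclusion. The genuinely substantive point, and the one that makes the whole scheme go through cleanly, is that radial control on $|f'|$ combined with the universal lower bound $\al_f(x)>0$ is precisely what prevents $\sigma$ from escaping to the unit circle.
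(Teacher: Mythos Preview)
Your proof is correct and more elementary than the paper's. The paper first invokes the Lehto--Virtanen theorem (which requires univalence, or at least normality) to upgrade the radial hypothesis $\lim_{r\to 1}|f'(rx)|=0$ to the non-tangential statement $\angle\lim_{z\to x}|f'(z)|=0$; it then applies Lemma~\ref{second-der} to deduce that $\angle\lim_{z\to x}f(z)$ exists in $\oD$, and finishes with the same Julia--Wolff--Carath\'eodory contradiction you give. Your route bypasses both Lehto--Virtanen and Lemma~\ref{second-der}: the direct integral bound yields the radial limit of $f$ immediately, the JWC contradiction rules out a boundary value, and Lindel\"of upgrades radial to non-tangential convergence. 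As you correctly observe, univalence plays no role in your argument; in fact you establish more than the paper's own Remark following the lemma, which still asks for the stronger non-tangential hypothesis~\eqref{ipogood} in the non-univalent case. The one thing the paper's route buys is the by-product $\angle\lim_{z\to x}f'(z)=0$, obtained via Lehto--Virtanen but not by your argument alone---though this extra information is not needed for the lemma's stated conclusion.
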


\begin{proof}
Since $f$ is univalent, by the Lehto-Virtanen theorem (see, {\sl e.g.} \cite[Lemma 9.3, Theorem 9.3]{P}), our hypothesis implies that
 \begin{equation}\label{ipogood}
\angle\lim_{z\to x}|f'(z)|=0.
\end{equation}
Now, let $K(x,R):=\{z\in \D: |x-z|<R(1-|z|)\}$, for some $R>1$, be a Stolz angle. By Lemma \ref{second-der} there exists $\sigma\in \oD$ such that $\angle \lim_{z\to x} f(z)=\sigma$.

Now, if $\sigma\in \de\D$,  the Julia-Wolff-Carath\'eodory Theorem \ref{JWC} implies that
\[
0= \lim_{(0,1)\ni r\to 1}|f'(rx)|=\al_f(x)>0,
\]
a contradiction.
\end{proof}

\begin{remark}
The previous lemma holds for holomorphic self-maps of the unit disc which are not necessarily univalent replacing the hypothesis  with \eqref{ipogood}.
\end{remark}

\begin{definition}
Let $f:\D \to \C$ be holomorphic. A point $x\in \de \D$ is a {\sl $\beta$-point}  if
\[
\angle \limsup_{z\to x} \frac{|f'(z)|}{|x-z|}=L<+\infty.
\]
We call $L$ the {\sl mass} of $f$ at $x$ and we denote by $\NC(f)$ the set of all $\beta$-points  of $f$.
\end{definition}

Note that if $x\in \NC(f)$ then $\angle \lim_{z\to x} |f'(z)|=0$.

\begin{remark}
By \cite[Theorem 2.1]{Be}, if $f:\D \to \C$ is univalent, for every $x\in \de \D$ the angular limit
\[
\angle\lim_{z\to x} \frac{|f'(z)|}{|x-z|}=L
\]
exists, with $L\in (0, +\infty]$. However, we will not use this result in here.
\end{remark}

\section{The main results}\label{4}  We are going to prove that regular poles of infinitesimal generators correspond to $\beta$-points of associated semigroups, plus some other simpler characterizations:

\begin{theorem}\label{main}
Let $G\in \Gen$ and let $(\phi_t)$ be the associated semigroup of holomorphic self-maps of $\D$. The following are equivalent:
\begin{enumerate}
\item $\displaystyle{\angle\limsup_{(0,1)\ni r\mapsto 1} |G(rx)|(1-r)>0},$
  \item $x\in \Pl(G)$,
  \item $x\in \NC(\phi_t)$ for some---hence for all---$t>0$,
  \item $\angle\lim_{z\to x}\phi_t'(z)=0$,  $\angle\lim_{z\to x}\phi_t''(z)=L\in \C$ for some---and hence for all---$t>0$,
  \item $\lim_{(0,1)\ni r\mapsto 1} |\phi_t'(rx)|=0$, $\limsup_{(0,1)\ni r\mapsto 1} |\phi_t''(rx)|<+\infty$ for some---and hence for all---$t>0$,
  \item there exists $t_0>0$ such that $\displaystyle{\limsup_{(0,1)\ni r\mapsto 1}\frac{|\phi_{t_0}'(rx)|}{1-r}<+\infty}$
\end{enumerate}
Moreover, if the previous conditions are satisfied then for all $t>0$
\begin{itemize}
  \item[a)] $\angle \lim_{z\to x}\phi_t(z)=\sigma_t\in \D$, $\angle \lim_{z\to x}\phi'_t(z)=0$,
  \item[b)] if $x\in \Pl_{|A|}(G)$ with $A=\angle\lim_{z\to x} G(z)(z-x)$ then
  \[
  \angle\lim_{z\to x}\phi_t''(z)=\angle \lim_{z\to x}\frac{\phi_t'(z)}{z-x}=\frac{G(\sigma_t)}{A}.
  \]
\end{itemize}
\end{theorem}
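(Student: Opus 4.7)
The plan is to derive every implication from the flow identity
\begin{equation*}
\phi_t'(z)\,G(z)\;=\;G(\phi_t(z)), \qquad z\in\D,\ t\geq 0,
\end{equation*}
obtained by differentiating $\phi_{s+t}=\phi_s\circ\phi_t$ in $s$ at $0$. The equivalence (1) $\Leftrightarrow$ (2) is immediate from Lemma \ref{exists}: since $\angle\lim_{z\to x}G(z)(x-z)\in\C$ always exists, a strictly positive radial limsup of $|G(rx)|(1-r)$ coincides with a nonzero value of this limit, i.e.\ $x\in\Pl(G)$.

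The crux is (2) $\Rightarrow$ (a), (b). Assume $x\in\Pl(G)$ with $A:=\angle\lim_{z\to x}G(z)(z-x)\neq 0$, so $|G(z)|\to\infty$ nontangentially at $x$. A preliminary observation: $x$ cannot be a boundary regular null point of $G$ (else $G(z)(x-z)\to 0$), so by Proposition \ref{bsem} it is not a boundary regular fixed point of any $\phi_t$ nor the Denjoy-Wolff point. Fix $t>0$ and extract a radial subsequential cluster value $y=\lim_n\phi_t(r_n x)\in\overline{\D}$. If $y\in\D$, the flow identity gives $|\phi_t'(r_n x)|=|G(y)|/|G(r_n x)|\to 0$, and Lemma \ref{noboundary} applied to the univalent $\phi_t$ promotes this to the full nontangential limit $\angle\lim_{z\to x}\phi_t(z)=\sigma_t:=y\in\D$. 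To exclude $y\in\de\D$ (which must have $y\ne x$) I plan to pass to the K\"onigs function $h$ of Proposition \ref{Unival-VectorField}: using $h'G=1$ (boundary DW case) or $h'G=\lambda h$ (interior case), the condition $|G(z)(x-z)|\to|A|$ translates into a $\beta$-type estimate on $h$ at $x$, which forces the prime end $\lim_n h(r_n x)$ to sit on $\de h(\D)$ in such a way that its forward translate (or rotation by $e^{\lambda t}$) lands inside $h(\D)$; hence $\phi_t(rx)=h^{-1}(h(rx)+t)$ (resp.\ $h^{-1}(e^{\lambda t}h(rx))$) stays in $\D$ in the limit. This proves (a). For (b), rearranging the flow identity as
\begin{equation*}
\frac{\phi_t'(z)}{z-x}\;=\;\frac{G(\phi_t(z))}{G(z)(z-x)}
\end{equation*}
and passing to the nontangential limit yields $G(\sigma_t)/A$; since $\phi_t'(z)\to 0$ nontangentially by the flow identity, Lemma \ref{second-der} identifies this value with $\angle\lim_{z\to x}\phi_t''(z)$.

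The remaining equivalences are routine. From (a)--(b): (4) is explicit, (5) follows by radial restriction, and (6) follows from (5) via the fundamental theorem of calculus applied along the radius, using $\phi_t'(rx)\to 0$ and the radial bound on $\phi_t''$. Next, (6) $\Rightarrow$ (3) is the Koebe-distortion/Lehto-Virtanen principle for the univalent $\phi_{t_0}$, which upgrades a radial bound $|\phi_{t_0}'(rx)|/(1-r)\leq M$ to a nontangential one. Finally (3) $\Rightarrow$ (2): if $|\phi_t'(z)|/|x-z|$ is nontangentially bounded then $|\phi_t'(z)|\to 0$, Lemma \ref{noboundary} delivers $\phi_t(z)\to\sigma_t\in\D$ nontangentially, and the flow identity rewritten as $|G(z)(x-z)|=|G(\phi_t(z))|\cdot|x-z|/|\phi_t'(z)|$ has a finite positive nontangential limit as soon as $G(\sigma_t)\ne 0$. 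The latter is checked via Proposition \ref{Unival-VectorField}: in the interior case $h(\sigma_t)=e^{\lambda t}h(x)$, and $h(x)\ne 0$ because $h$ is univalent and $h^{-1}$ is continuous at $0=h(\tau)\in h(\D)$, so $\sigma_t\ne\tau$; in the boundary case $G$ has no zero in $\D$ at all.

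The main obstacle is the boundary-cluster exclusion in Step 2. A priori the flow identity allows $\phi_t(rx)$ to approach $\de\D$ with $|G(z)|$ and $|G(\phi_t(z))|$ blowing up in a compatible way, so purely local arguments using $G$ alone seem insufficient. Forcing the orbit back into $\D$ is where one must coordinate the K\"onigs linearization, the univalence of $\phi_t$, and the Berkson--Porta dictionary of Proposition \ref{bsem}.
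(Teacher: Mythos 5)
The logical skeleton you set up is sound, and the peripheral implications ((1)$\Leftrightarrow$(2) via Lemma \ref{exists}, (4)$\Rightarrow$(5)$\Rightarrow$(6), the rerouting (6)$\Rightarrow$(3)$\Rightarrow$(2) via Koebe distortion, and the derivation of (a), (b), (4) once $\sigma_t\in\D$ is known) all go through essentially as in the paper. But the heart of the theorem --- showing that (2) forces the radial cluster set of $r\mapsto\phi_t(rx)$ to avoid $\de\D$ --- is not proved; it is only announced. Your sentence ``the condition $|G(z)(x-z)|\to|A|$ translates into a $\beta$-type estimate on $h$ at $x$, which forces the prime end $\lim_n h(r_nx)$ to sit on $\de h(\D)$ in such a way that its forward translate \dots lands inside $h(\D)$'' is precisely the statement to be established, and no mechanism is offered for it; you concede as much in your final paragraph. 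Note also that you cannot invoke Lemma \ref{noboundary} after finding a single subsequence with cluster value in $\D$: that lemma requires the full radial limit $\lim_{r\to1}|\phi_t'(rx)|=0$, which is unavailable exactly as long as boundary cluster values have not been excluded. Moreover, in the interior Denjoy--Wolff case the passage to $h$ is circular, since the finiteness of $\angle\lim_{z\to x}h(z)$ is itself deduced (in Theorem \ref{main2}) from Theorem \ref{main}(a), not the other way around.

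The paper closes this gap by a global counting argument of which there is no trace in your proposal. If $\limsup_{r\to1}|\phi_{t_0}'(rx)|>0$, an auxiliary generator $g(z)=-zp(\phi_{t_0}(z))$ combined with Lemma \ref{exists} shows that $|G(\phi_{t_0}(r_nx))|(1-r_n)$ stays bounded, whence $0<\al_{\phi_{t_0}}(x)<+\infty$ and $x$ is a regular contact point of $\phi_{t_0}$ which is not fixed. Lemma \ref{L1} then makes $x$ a regular contact point of every $\phi_{t_0/2^n}$, the two-preimage lemma \cite[Lemma 8.2]{CP} together with Proposition \ref{bsem} forces the images $\sigma_n$ to be pairwise distinct, and the Julia--Wolff--Carath\'eodory theorem shows each $\sigma_n$ lies in $\cup_{D\geq C}\Pl_D(G)$ --- contradicting Corollary \ref{finito}. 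It is this finiteness of the set of poles of mass at least $C$, not any local or geometric property of $h(\D)$ near $h(x)$, that forces the orbit back into $\D$; your proof needs either this argument or a genuine substitute for it.
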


\begin{proof}
By Lemma \ref{exists}, (1) is equivalent to (2). Moreover, clearly, (3) implies (6). While (4) implies (5).

Next, assume (6) holds. By Lemma \ref{noboundary} there exists $\sigma\in \D$ such that $\angle\lim_{z\to x}\phi_{t_0}(z)=\sigma$.
Recall that, differentiating in $s$ the equality $\phi_{t}(\phi_s(z))=\phi_{t+s}(z)$ at $s=0$, for all $t\geq 0$ and $z\in \D$ we have
\begin{equation}\label{diffeq}
\phi_t'(z)G(z)=G(\phi_t(z)).
\end{equation}
Thus, for $r\in (0,1)$, we obtain
\begin{equation}\label{equo1}
\frac{|\phi'_{t_0}(rx)|}{1-r}|G(rx)|(1-r)=|G(\phi_{t_0}(rx))|.
\end{equation}
Now, $\lim_{r\to 1}G(\phi_{t_0}(rx))=G(\sigma)\in \C$.
If  $G(\sigma)\neq 0$ then (1) follows immediately from \eqref{equo1}. But, if  $G(\sigma)=0$ then $\phi_{t_0}(\sigma)=\sigma$. The function $\phi_{t_0}$ is univalent, thus for any open disc $U$ relatively compact in $\D$ and containing $\sigma$ there exists an open set  $V$ relatively compact in $\D$ and containing $\sigma$  such that $\phi_{t_0}(U)=V$ and $\phi_{t_0}(\D\setminus U)\subset \D\setminus V$. Since $\{rx\}$ is eventually outside  any relatively compact disc containing $\sigma$, it follows $\phi_{t_0}(rx)\not\to \sigma$, a contradiction. Thus $G(\sigma)\neq 0$.

Now, assume that (2) holds. In this case we cannot argue directly using \eqref{equo1} because, {\sl a priori}, $r\mapsto \phi_{t_0}(rx)$ might have (even tangential) limit at a boundary point where $G$ explodes. Let
\[
C:=\angle\lim_{z\to x} |G(z)(x-z)|>0.
\]
For $t\geq 0$, let
\[
A(t):=\limsup_{(0,1)\ni r\mapsto 1}|\phi_t'(rx)|.
\]
We claim that
\begin{equation}\label{claim1}
A(t)=0, \quad \forall t> 0.
\end{equation}
Assume for the moment that \eqref{claim1} is true. Fix $t>0$. Then Lemma \ref{noboundary} implies that there exists $\sigma_t\in \D$ such that $\angle\lim_{z\to x}\phi_t(z)=\sigma_t$. By \eqref{diffeq}
\[
\angle \lim_{z\to x}\frac{|\phi_t'(z)|}{|x-z|}=\angle\lim_{z\to x}\frac{|G(\phi_t(z))|}{|G(z)||x-z|}=\frac{|G(\sigma_t)|}{C}<+\infty,
\]
proving that $x\in \NC(\phi_t)$ for all $t>0$, that is (3).

We are left to show that \eqref{claim1} holds. Arguing by contradiction, let $t_0>0$ be such that $A(t_0)>0$. Let $\{r_n\}\subset (0,1)$ be a sequence converging to $1$ such that
\[
\lim_{n\to \infty} |\phi_{t_0}'(r_nx)|=A(t_0).
\]
Up to subsequences, we can also assume that $\phi_{t_0}(r_nx)\to \sigma\in \oD$. By \eqref{diffeq} we have for $n\in \N$
\begin{equation}\label{limited}
|\phi_{t_0}'(r_nx)|=\frac{|G(\phi_{t_0}(r_nx))||1-r_n|}{|G(r_nx)||1-r_n|}.
\end{equation}
Now, $|G(r_nx)||1-r_n|\to C>0$ by hypothesis. Thus, if $\sigma\in \D$ then $|G(\phi_{t_0}(r_nx))||1-r_n|\to 0$, which implies $A(t_0)=0$, against our hypothesis on $A(t_0)$. Hence $\sigma\in \de \D$.

Let $G(z)=(\tau-z)(1-\overline{\tau}z)p(z)$ be the Berkson-Porta decomposition of $G$. Let $g(z):=-zp(\phi_{t_0}(z))$.  Again by the Berkson-Porta formula,  $g\in \Gen$. By Lemma \ref{exists} it follows that
\[
\lim_{n\to \infty}|g(r_n x)||1-r_n|=b<+\infty.
\]
Hence there exists $M>0$ such that
\[
|G(\phi_{t_0}(r_nx))||1-r_n|=|g(r_n x)||1-r_n|\frac{|\tau-\phi_{t_0}(r_nx)||1-\overline{\tau}\phi_{t_0}(r_nx)|}{r_n}\leq M
\]
for all $n\in \N$. Thus \eqref{limited} shows that
\[
0<A(t_0)<+\infty.
\]
By the Julia-Wolff-Carath\'eodory Theorem \ref{JWC}, it follows that $\angle \lim_{z\to x} \phi_{t_0}(z)=\sigma$ and $\angle\lim_{z\to x}|\phi_{t_0}'(z)|=A(t_0)$. Hence, $x$ is a regular contact point for $\phi_{t_0}$. If $x$ were a boundary regular fixed point for $\phi_{t_0}$ then by Proposition \ref{bsem} it would hold $\angle \lim_{z\to x}|G(z)|=0$ against $x\in \Pl(G)$.

Therefore $x$ is a regular contact point for $\phi_{t_0}$ which is not  fixed. We claim that $x$ is a regular contact point for $\phi_{t_0/2^n}$ for all $n\in \N$ and that the points $\sigma_n:=\phi_{t_0/2^n}(x)$ are such that $\sigma_m\neq \sigma_n$ for $m\neq n$.

Since $\phi_{t_0}=\phi_{t_0/2}\circ \phi_{t_0/2}$, we can apply Lemma \ref{L1}, which implies that $x$ is a regular contact point for $\phi_{t_0/2}$. Moreover, if $\sigma_2:=\angle \lim_{z\to x}\phi_{t_0/2}(z)$ then $\sigma_2$ is a regular contact point for $\phi_{t_0/2}$ and $\phi_{t_0/2}(\sigma_2)=\phi_{t_0}(x)$. By induction we prove that $x$ is a regular contact point for $\phi_{t_0/2^n}$ for every $n\in \N$.

Now we have to show that $\sigma_n\neq \sigma_m$ for $n\neq m$. Assume this is not the case and let $\sigma_n=\sigma_m$ for some $n>m$. Then
\[
\phi_{\frac{t_0}{2^n}(2^{n-m}-1)}(\sigma_n)=\phi_{\frac{t_0}{2^m}-\frac{t_0}{2^n}}(\phi_{\frac{t_0}{2^n}}(x))=\phi_{\frac{t_0}{2^m}}(x)=\sigma_{m}=\sigma_n.
\]
Hence, $\phi_{\frac{t_0}{2^n}(2^{n-m}-1)}$ has a boundary  fixed point at $\sigma_n$, and by Proposition \ref{bsem}, $\phi_t$ has a boundary  fixed point at $\sigma_n$ for all $t\geq 0$. But $\sigma_n$ is a regular contact point for $\phi_{t_0/2^n}$ hence in fact it is a boundary regular fixed point for $\phi_{t_0/2^n}$, which implies that $\sigma_n$ is a boundary regular fixed point for $\phi_t$ for all $t\geq 0$ by Proposition \ref{bsem}. Also,
\[
\phi_{t_0}(x)=\phi_{t_0/2^n}^{\circ 2^n}(x)=\phi_{t_0/2^n}^{\circ (2^n-1)}(\phi_{t_0/2^n}(x))=\phi_{t_0/2^n}^{\circ (2^n-1)}(\sigma_n)=\sigma_n.
\]
Now, by hypothesis $x$ is not a boundary regular fixed point for $\phi_{t_0}$, hence, $x\neq \sigma_n$. Thus
$\phi_{t_0}(x)=\phi_{t_0}(\sigma_n)$ and $\sigma_n$ has two distinct pre-images $x, \sigma_n$ by the map $\phi_{t_0}$.
By \cite[Lemma 8.2]{CP} it follows that either $\al_{\phi_{t_0}}(x)=+\infty$ or $\al_{\phi_{t_0}}(\sigma_n)=+\infty$, namely, both $x$ and $\sigma_n$ can not be regular contact points for $\phi_{t_0}$, reaching a contradiction. Thus we have proved that $x$ is a regular contact point for $\phi_{t_0/2^n}$ for all $n\in \N$ and  $\sigma_m\neq \sigma_n$ for $m\neq n$.

By \eqref{diffeq} we have for all $r\in (0,1)$
\[
|G(\phi_{t_0/2^n}(rx))||\sigma_n-\phi_{t_0/2^n}(rx)|=|\phi'_{t_0/2^n}(rx)| |G(rx)||1-r|\frac{|\sigma_n-\phi_{t_0/2^n}(rx)|}{|1-r|}.
\]
Taking the limit for $r\to 1$, using the Julia-Wolff-Carath\'eodory Theorem \ref{JWC} and taking into account that the curve $(0,1)\ni r\mapsto \phi_{t_0/2^n}(rx)$ is non-tangential because $x$ is a regular contact point (see, {\sl e.g.} \cite[pag. 54]{CMbook}), we obtain
\[
\lim_{r\to 1} |G(\phi_{t_0/2^n}(rx))||\sigma_n-\phi_{t_0/2^n}(rx)|=C (\al_{\phi_{t_0/2^n}}(x))^2\geq C,
\]
because $\al_{\phi_{t_0/2^n}(x)}>1$ by Remark \ref{magg}. Therefore for each $n\in \N$ we have that $\sigma_n\in  \cup_{D\geq C}\Pl_D(G)$. This implies that the set $\cup_{D\geq C}\Pl_D(G)$ is infinite, against Proposition \ref{finito}. We have finally reached a contradiction and claim \eqref{claim1} is proved.

Using Lemmas \ref{second-der}, \ref{noboundary} and equality \eqref{diffeq}, statements (1), (2), (3) or (6)   imply (a) and (b) and (4).

Finally, if (5) holds, the mean value theorem implies (6).
\end{proof}

In the previous proof  we showed the following fact: if $(\phi_t)$ is a semigroup of holomorphic self-maps of $\D$ such that $x\in \de \D$ is a regular contact point  for $\phi_{t_0}$ for some $t_0>0$ then  $x$ is a regular contact point for $\phi_{t_0/2^n}$ for all $n\in \N$. Moreover, if $x$ is not a boundary regular fixed point for $\phi_{t_0}$ and  $\sigma_n:=\angle \lim_{z\to x}\phi_{t_0/2^n}(z)$, then  $\sigma_n\neq \sigma_m$ for $n\neq m$. This fact is in a sense ``sharp'', as the following example shows:

\begin{example}
Let $\Omega:=\{w\in \C : \Re w>0\}\setminus (0,2]\times\{0\}$. Let $\v: \D \to \Omega$ be a Riemann mapping. Consider the semigroup $\phi_t:=\v^{-1}(\v(z)+t)$, $t\geq 0$. Let $a=(1, 0)$. Then there exists two points $x_1, x_2\in \de \D$ such that $\v(x_1)=\v(x_2)=a$. It is possible to prove that both $x_1, x_2$ are boundary regular contact point for $\phi_t$ for $t< 1$ but $|\phi_t(x_1)|, |\phi_t(x_2)|<1$ for $t>2$.
\end{example}

Now we are going to relate the regular poles of an infinitesimal generator to the $\beta$-points of the associated K\"onigs function:

\begin{theorem}\label{main2} Let $G\in \Gen$ and  let $h:\D \to \C$ be the associated normalized K\"onigs function. The following are equivalent:
\begin{enumerate}
\item $x\in \Pl(G)$,
  \item $x\in \NC(h)$,
  \item $\displaystyle{\liminf_{(0,1)\ni r\mapsto 1} \frac{|h'(rx)|}{1-r}<+\infty}$.
    \item $\angle\lim_{z\to x} h(z)\in \C$, $\angle\lim_{z\to x}h'(z)=0$ and  $\angle\lim_{z\to x}h''(z)=\ell\in \C$,
  \item $\lim_{(0,1)\ni r\mapsto 1} |h'(rx)|=0$, $\limsup_{(0,1)\ni r\mapsto 1} |h''(rx)|<+\infty$,
  \end{enumerate}
Moreover, let $\tau\in\oD$ be the Denjoy-Wolff point of $G$. If the previous conditions are satisfied and  $x\in \Pl_{|A|}(G)$ with $A=\angle\lim_{z\to x} G(z)(z-x)$, then
\begin{itemize}
  \item[a)] If $\tau \in \D$ then
  \[
  \angle\lim_{z\to x}h''(z)=\angle \lim_{z\to x}\frac{h'(z)}{z-x}=\frac{h(x)G'(\tau)}{A}\neq 0,
  \]
  where $h(x)=\angle\lim_{z\to x}h(z)$.
  \item[b)]  If $\tau \in \de\D$  then
  \[
  \angle\lim_{z\to x}h''(z)=\angle \lim_{z\to x}\frac{h'(z)}{z-x}=\frac{1}{A}.
  \]
\end{itemize}
\end{theorem}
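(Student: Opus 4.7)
The proof splits naturally according to the location of the Denjoy-Wolff point $\tau$, since the Koenigs equation takes a different form in the two cases. Throughout, I freely invoke Theorem \ref{main} to transfer information between $G$ and the semigroup $(\phi_t)$; in particular, whenever $x\in\Pl(G)$, part (a) of that theorem provides the angular limits $\sigma_t:=\angle\lim_{z\to x}\phi_t(z)\in\D$ and $\angle\lim_{z\to x}\phi_t'(z)=0$ for every $t>0$.

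If $\tau\in\partial\D$ then $h'(z)G(z)=1$, so
\[
\frac{h'(z)}{z-x}=\frac{1}{G(z)(z-x)},
\]
and by Lemma \ref{exists} the denominator has a non-tangential limit $A\in\C$ at $x$, with $|A|$ equal to the mass of the pole when $|A|>0$. The equivalences (1)$\Leftrightarrow$(2)$\Leftrightarrow$(3) follow at once, together with formula (b). Lemma \ref{second-der} applied first to $h'$ and then to $h$ produces (4) and (5), in particular the existence of $\angle\lim_{z\to x} h(z)\in\C$ (alternatively available from $h(z)=h(\phi_t(z))-t$ and $\phi_t(z)\to\sigma_t\in\D$ non-tangentially).

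For $\tau\in\D$ use $h'(z)G(z)=G'(\tau)h(z)$. Starting from (1), taking angular limits in $h\circ\phi_t=e^{G'(\tau)t}h$ and using continuity of $h$ at the interior point $\sigma_t$ gives
\[
h(x):=\angle\lim_{z\to x}h(z)=e^{-G'(\tau)t}h(\sigma_t).
\]
The key observation is that $h(x)\neq 0$: if some sequence $r_nx\to x$ had $h(r_nx)\to 0$, then by univalence of $h$ and continuity of $h^{-1}$ at the interior point $0\in h(\D)$ one would deduce $r_nx=h^{-1}(h(r_nx))\to h^{-1}(0)=\tau$, contradicting $r_nx\to x\neq\tau$. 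Differentiating the intertwining yields $h'(z)=e^{-G'(\tau)t}h'(\phi_t(z))\phi_t'(z)$, and combining with Theorem \ref{main}(b) and the Koenigs relation $h'(\sigma_t)G(\sigma_t)=G'(\tau)h(\sigma_t)$ gives
\[
\angle\lim_{z\to x}\frac{h'(z)}{z-x}=e^{-G'(\tau)t}\frac{h'(\sigma_t)G(\sigma_t)}{A}=\frac{G'(\tau)h(x)}{A}.
\]
Lemma \ref{second-der} identifies this with $\angle\lim_{z\to x} h''(z)$, establishing (a), (4), and consequently (2) and (5). The implications (4)$\Rightarrow$(2)$\Rightarrow$(3) are trivial, while (5)$\Rightarrow$(3) follows from the fundamental theorem of calculus applied to $h''$ along the radial segment, exploiting $\lim_{s\to 1}h'(sx)=0$.

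The remaining direction (3)$\Rightarrow$(1) is the main technical point. Given a sequence $r_n\to 1$ with $|h'(r_nx)|/(1-r_n)\leq M$, the Koenigs equation rewrites as
\[
|G(r_nx)|(1-r_n)=|G'(\tau)|\,\frac{|h(r_nx)|}{|h'(r_nx)|/(1-r_n)}.
\]
The same univalence argument prevents $h(r_nx)\to 0$, so $\liminf_n|h(r_nx)|>0$ and hence $\limsup_{r\to 1}|G(rx)|(1-r)>0$. Since by Lemma \ref{exists} the radial limit $\lim_{r\to 1}|G(rx)|(1-r)=|A|$ exists, this forces $|A|>0$, i.e., $x\in\Pl(G)$. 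The main obstacle is exactly this step, where one has to upgrade a \emph{liminf} hypothesis into a non-tangential conclusion about $G$; univalence of $h$ is used essentially to rule out $h$ vanishing along the chosen radial subsequence.
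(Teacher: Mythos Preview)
Your proof is correct and follows essentially the same approach as the paper: both hinge on the Koenigs identities $h'G=1$ (boundary case) and $h'G=G'(\tau)h$ (interior case), use the univalence of $h$ to bound $|h|$ away from zero near $x$, invoke Lemma~\ref{second-der} to pass between $h'/(z-x)$ and $h''$, and appeal to Theorem~\ref{main} for $\sigma_t\in\D$. The only cosmetic difference is that for formula (a) you route through Theorem~\ref{main}(b) and the differentiated intertwining relation, whereas the paper reads the limit off directly from the identity $\dfrac{h'(z)}{z-x}=\dfrac{G'(\tau)h(z)}{G(z)(z-x)}$; and for (5)$\Rightarrow$(3) you integrate $h''$ while the paper applies the mean value theorem to the real and imaginary parts of $h'$---these are equivalent.
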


\begin{proof}
Clearly, (2) implies (3).

In case $\tau\in \de \D$, then $G(z)=1/h'(z)$, hence
\begin{equation}\label{h-Gfrontiera}
\frac{h'(z)}{z-x}=\frac{1}{G(z)(z-x)}.
\end{equation}
From here and from Lemma \ref{exists} we have clearly that (1) is equivalent to (2) and (3).

In case $\tau\in \D$ we have $G(z)=G'(\tau)h(z)/h'(z)$ and $h(\tau)=0$. Therefore
\begin{equation}\label{h-Ginterior}
\frac{h'(z)}{z-x}\frac{1}{h(z)}=\frac{G'(\tau)}{G(z)(z-x)}.
\end{equation}
Assume (1) holds and let $(\phi_t)_{t\geq 0}$ be the semigroup of holomorphic self-maps of $\D$ generated by $G$. Since $h(\phi_t(z))=e^{G'(\tau)t}h(z)$, from Theorem \ref{main}.a)  it follows that
\[
\angle \lim_{z\to x} h(z)=e^{-G'(\tau)t}h(\sigma_t)\in \C.
\]
Hence (2) follows from \eqref{h-Ginterior}.
Conversely, if (3) holds, since $\angle\liminf_{z\to x}|h(z)|>0$ (because $h$ is univalent and $h(\tau)=0$), \eqref{h-Ginterior} and Lemma \ref{exists} imply (1).

Now, if (1)---(3) are satisfied, then (a) and (b) follow from \eqref{h-Ginterior}, \eqref{h-Gfrontiera} and Lemma \ref{second-der}. Hence, (4) and (5) hold.

Clearly, (4) implies (5). Finally, if (5) holds, the mean value theorem applied to the real and imaginary part of $h'$, implies (3).
\end{proof}

\begin{remark}
Let $h:\D \to \C$ be univalent and star-like, $h(0)=0, h'(0)=1$, and let $\Omega:=h(\D)$. We can define the semigroup of holomorphic self-maps of $\D$ whose elements are $\phi_t:=h^{-1}(e^{-t}h(z))$. A direct computation shows that the associate infinitesimal generator is of the form $G(z)=-zp(z)$, with $\Re p(z)>0$ for all $z\in \D$ and $p(0)=1$. From \cite[Theorem 2.1]{Be} and Theorem \ref{main2} we infer that the points of $\de \Omega$ with positive $\beta$-numbers correspond to regular poles of $G$ and, if $A>0$ and if $x\in \Pl_A(G)$  then the $\beta$-number of $h(x)$ with respect to $0$ is given by
\[
\beta_\Omega(0, h(x))=2|h(x)|A.
\]
\end{remark}

\section{Dual generators}\label{5}

\begin{definition}
Let $G\in \Gen$, $G\not\equiv 0$. Let $G(z)=(\tau-z)(1-\overline{\tau}z)p(z)$ be the Berkson-Porta decomposition of $G$, where $\tau\in \oD$ is the Denjoy-Wolff point of the associated semigroup and $p:\D \to \C$ holomorphic with $\Re p(z)\geq 0$ for all $z\in \D$. We define the {\sl dual infinitesimal generator}
\[
\hat{G}(z):=(\tau-z)(1-\overline{\tau}z)\frac{1}{p(z)}.
\]
\end{definition}

\begin{remark}
Let  $G\in \Gen$, $G\not\equiv 0$ and let $\tau\in \oD$ be the Denjoy-Wolff point of the associated semigroup. Then $\hat{G}\in \Gen$ and the associated semigroup has Denjoy-Wolff point $\tau$. Moreover, $\hat{\hat{G}}=G$.

In fact, this follows at once from the Berkson-Porta formula as soon as one realizes that $1/p:\D \to \C$ is holomorphic (because $p(z)=0$ for some $z\in \D$ if and only if $p\equiv 0$ if and only if $G\equiv 0$) and $\Re (1/p(z))\geq 0$.
\end{remark}

As a matter of notation, if $G\not\equiv 0$ is an infinitesimal generator in $\D$,  we denote by $(\hat{\phi}_t)$ the semigroup and by $\hat{h}$ the K\"onigs function associated with $\hat{G}$.

Notice that
\begin{equation}\label{GGhat}
G(z)\hat{G}(z)=(\tau-z)^2(1-\overline{\tau}z)^2.
\end{equation}

From this formula we immediately have:

\begin{proposition}\label{BRNP-P}
Let $G\in \Gen$, $G\not\equiv 0$ with Denjoy-Wolff $\tau\in \oD$. Let $x\in \de \D\setminus\{\tau\}$. Then the following are equivalent:
\begin{enumerate}
  \item $x$ is a boundary regular null point  for $G$ of dilation $\ell:=\lim_{r\to 1}|G(rx)|/(1-r)>0$
  \item $x$ is a regular pole  for $\hat{G}$ of mass $|\tau-x|^4/\ell$.
\end{enumerate}
\end{proposition}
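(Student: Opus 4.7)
The centerpiece of the argument is the algebraic identity \eqref{GGhat}, $G(z)\hat{G}(z)=(\tau-z)^2(1-\overline{\tau}z)^2$, together with the elementary observation that for every $x\in\de\D$ one has $|1-\overline{\tau}x|=|\tau-x|$: indeed, a direct expansion gives $|1-\overline{\tau}x|^2=1+|\tau|^2-2\Re(\overline{\tau}x)=|\tau-x|^2$, and consequently $|(\tau-x)^2(1-\overline{\tau}x)^2|=|\tau-x|^4$. For the implication (1)\,$\Rightarrow$\,(2), I would simply rewrite the identity as
\[
\hat{G}(z)(x-z)=-\frac{(\tau-z)^2(1-\overline{\tau}z)^2}{G(z)/(z-x)}.
\]
Under the hypothesis, the denominator on the right has a non-zero non-tangential limit $\ell$ at $x$, while the numerator is a polynomial converging to $-(\tau-x)^2(1-\overline{\tau}x)^2$. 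Taking angular limits and then moduli yields $\angle\lim_{z\to x}|\hat{G}(z)(x-z)|=|\tau-x|^4/\ell$, which is exactly the assertion $x\in\Pl_{|\tau-x|^4/\ell}(\hat{G})$.

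For the converse (2)\,$\Rightarrow$\,(1), the plan is to start from $x\in\Pl_{|\tau-x|^4/\ell}(\hat{G})$ with $\ell>0$. By Lemma \ref{exists} applied to $\hat{G}$, the complex angular limit $a:=\angle\lim_{z\to x}\hat{G}(z)(x-z)$ exists with $|a|=|\tau-x|^4/\ell$. Inverting the identity gives
\[
\frac{G(z)}{z-x}=-\frac{(\tau-z)^2(1-\overline{\tau}z)^2}{\hat{G}(z)(x-z)},
\]
so the angular limit of $G(z)/(z-x)$ exists as a complex number of modulus $\ell$. The real task is to show this limit is in fact the positive real number $\ell$, rather than some other complex number of the same modulus. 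For this I would apply the Cowen-Pommerenke Lemma \ref{Co-Po} to the holomorphic function $q:=1/p$ appearing in the Berkson-Porta decomposition $\hat{G}(z)=(\tau-z)(1-\overline{\tau}z)q(z)$: writing $\hat{G}(z)(x-z)=(\tau-z)(1-\overline{\tau}z)\,q(z)(x-z)$ and using $1-\overline{x}z=\overline{x}(x-z)$, the lemma identifies $\angle\lim_{z\to x}q(z)(x-z)$ as $2Lx$ for some $L\ge 0$. Inserting this into the factorization of $a$ pins down the argument of $a$ so that $-(\tau-x)^2(1-\overline{\tau}x)^2/a$ lies in $(0,+\infty)$ and indeed equals $\ell$.

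The main obstacle is precisely this phase/reality issue in the converse: the identity \eqref{GGhat} transfers modular information transparently, but the defining condition for a boundary regular null point requires the angular limit of $G(z)/(z-x)$ to be a \emph{real} (and in fact positive, since $x$ differs from the Denjoy-Wolff point) number. The Cowen-Pommerenke lemma, through its Herglotz-type positivity structure on $q=1/p$, is exactly what forces the arguments to align; without it one only recovers that the modulus of the candidate dilation is correct, not its sign or phase.
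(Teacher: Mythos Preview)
Your proof is correct and follows the same route as the paper, which simply declares the result immediate from the product identity \eqref{GGhat}. In fact you supply more detail than the paper does: for the implication $(2)\Rightarrow(1)$ the paper's ``immediately'' glosses over precisely the phase issue you isolate, namely that the existence of $\angle\lim_{z\to x}G(z)/(z-x)$ as a complex number of modulus $\ell$ does not by itself guarantee that this limit lies in $\R$, as required by the definition of a boundary regular null point. Your use of Lemma~\ref{Co-Po} on $q=1/p$ to force $\angle\lim_{z\to x}q(z)(x-z)=2Lx$ with $L\ge 0$, and the subsequent computation $-\bar{x}(\tau-x)(1-\bar{\tau}x)=(1-\bar{x}\tau)(1-\bar{\tau}x)=|1-\bar{\tau}x|^2=|\tau-x|^2>0$, is exactly the right way to close this gap. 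So your argument is the paper's argument, made rigorous.
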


Now we give the following complete characterization of boundary regular null points:

\begin{proposition}
Let $G\in \Gen$, $G\not\equiv 0$, with Denjoy-Wolff point $\tau\in\oD$. Let $h$ be the associated K\"onigs function and $(\phi_t)$ the associated semigroup. Let $x\in \de\D\setminus\{\tau\}$. The following are equivalent:
\begin{enumerate}
\item $\liminf_{(0,1)\ni r\to 1} |G(rx)|/(1-r)<+\infty$,
  \item $x$ is a boundary regular null point for $G$,
  \item $x$ is a boundary regular fixed point for some---and hence any---$\phi_t$, $t>0$,
\item there exists $\rho>0$ such that \begin{itemize}
  \item  $\displaystyle{\lim_{(0,1)\ni r\to 1}\frac{\log |h(rx)|}{\Re G^{\prime }(\tau )\log  (1-r)}=\rho}$ if $\tau\in \D$,
  \item $\displaystyle{\lim_{(0,1)\ni r\to 1}\frac{\Re h(rx)}{\log(1-r)} =\rho}$  if $\tau\in \de \D$,
  \end{itemize}
  \item \begin{itemize}
  \item  $\displaystyle{\limsup_{(0,1)\ni r\to 1} \frac{|h'(rx)|(1-r)}{|h(rx)|}>0}$ if $\tau\in \D$,
  \item $\displaystyle{\limsup_{(0,1)\ni r\to 1} |h'(rx)|(1-r)>0}$ if $\tau\in \de \D$,
  \end{itemize}
\end{enumerate}
Moreover, if the previous conditions are satisfied and $\ell:=\lim_{r\to 1}|G(rx)|/(1-r)>0$ then
\[
\angle\lim_{z\to x}|h(z)|=\infty,
\]
$\rho^{-1}=\ell$ and
 \begin{itemize}
  \item[a1)]  $\displaystyle{\angle\lim_{z\to x} \frac{h'(z)(z-x)}{h(z)}=\frac{G'(\tau)}{\ell}}$ if $\tau\in \D$,
  \item[a2)] $\displaystyle{\angle\lim_{z\to x} h'(z)(z-x)=\frac{1}{\ell}}$ if $\tau\in \de \D$.
  \end{itemize}
\end{proposition}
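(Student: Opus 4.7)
The equivalence $(2)\Leftrightarrow (3)$ is Proposition \ref{bsem}. My plan is to prove $(1)\Leftrightarrow (5)$, $(1)\Leftrightarrow (2)$ and $(2)\Rightarrow (4)$ by direct manipulation, and then to close the loop with $(4)\Rightarrow (1)$ by a geometric argument; throughout I will track constants so that the ``moreover'' clauses $\rho^{-1}=\ell$, (a1), (a2), and $\angle\lim|h(z)|=\infty$ come out automatically.

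For $(1)\Leftrightarrow (5)$, I would simply unfold the Koenigs equation of Proposition \ref{Unival-VectorField}. If $\tau\in\de\D$, then $h'(z)=1/G(z)$ yields $|h'(rx)|(1-r)=(1-r)/|G(rx)|$; if $\tau\in\D$, then $h'(z)/h(z)=G'(\tau)/G(z)$ yields $|h'(rx)|(1-r)/|h(rx)|=|G'(\tau)|(1-r)/|G(rx)|$. In both cases the $\limsup$ of the left-hand side is positive iff the $\liminf$ of $|G(rx)|/(1-r)$ is finite. For $(1)\Leftrightarrow (2)$, I would use the duality: by \eqref{GGhat}, $|G(rx)\hat{G}(rx)|=|\tau-rx|^2|1-\bar\tau rx|^2$ is bounded above and below as $r\to 1^-$ since $x\neq\tau$, so (1) is equivalent to $\limsup|\hat{G}(rx)|(1-r)>0$, which by Theorem \ref{main} applied to $\hat G$ is equivalent to $x\in\Pl(\hat G)$, in turn equivalent to (2) by Proposition \ref{BRNP-P}.

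For $(2)\Rightarrow (4)$, I would integrate the Koenigs equation along the radial segment. Assume (2) with dilation $\ell>0$, so that $G(sx)=-\ell\,x(1-s)(1+\epsilon(s))$ with $\epsilon(s)\to 0$ as $s\to 1^-$. If $\tau\in\de\D$, then
\[
h(rx)=\int_0^r\frac{x\,ds}{G(sx)}=\frac{1}{\ell}\log(1-r)+\int_0^r\frac{-\epsilon(s)\,ds}{\ell(1-s)(1+\epsilon(s))},
\]
and the residual integral is $o(\log(1-r))$ by splitting $[0,r]$ into a piece close to $1$ (where $|\epsilon|$ is arbitrarily small) and a compact complement (which is $O(1)$). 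Taking real parts gives (4) with $\rho=1/\ell$, while (a2) follows immediately from $h'(z)(z-x)=(z-x)/G(z)\to 1/\ell$. If $\tau\in\D$, the analogous integration of $(\log h)'(sx)\,x=G'(\tau)\,x/G(sx)$ produces $\log h(rx)=[G'(\tau)/\ell]\log(1-r)+O(1)$, hence $\log|h(rx)|/[\Re G'(\tau)\log(1-r)]\to 1/\ell=:\rho$, and (a1) follows from $h'(z)(z-x)/h(z)=G'(\tau)(z-x)/G(z)\to G'(\tau)/\ell$. In both cases $|h(rx)|\to+\infty$, which by a Lindel\"of-type argument for the univalent function $h$ is upgraded to $\angle\lim_{z\to x}|h(z)|=+\infty$.

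The direction $(4)\Rightarrow (1)$ will be the main obstacle: a pointwise asymptotic on $\Re h(rx)$ or $\log|h(rx)|$ does not a priori control $h'(rx)$. My plan is to exploit the Koebe distortion estimate $|h'(rx)|(1-r^2)\asymp\delta(h(rx),\de h(\D))$ for the univalent $h$, reducing (5) to showing $\limsup\delta(h(rx),\de h(\D))>0$. In the case $\tau\in\de\D$, the domain $h(\D)$ is invariant under $w\mapsto w+t$ for $t>0$, and (4) forces $\Re h(rx)\to-\infty$ at a logarithmic rate; translation-invariance and simple connectedness then force $h(rx)$ to converge to a nondegenerate prime end at $\Re w=-\infty$, along which $\delta$ is bounded below. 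In the case $\tau\in\D$, $h(\D)$ is star-like with respect to $0$, and (4) forces $|h(rx)|\to\infty$ polynomially along a direction $2\pi\eta_0\in[0,2\pi)$, which by star-likeness produces a sector $V_{\eta_0}(\alpha_0)\subset h(\D)$; Lemma \ref{angulo} then yields a boundary regular fixed point of $(\phi_t)$ at $x$, i.e.\ (3), already known equivalent to (2). Carrying out this geometric step carefully is the heart of the proof.
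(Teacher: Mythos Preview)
Your arguments for $(1)\Leftrightarrow(5)$, $(1)\Leftrightarrow(2)$, and $(2)\Rightarrow(4)$ are correct and essentially match the paper's (the paper uses l'H\^opital instead of explicit integration for $(2)\Rightarrow(4)$, which amounts to the same thing; your ``$+O(1)$'' in the $\tau\in\D$ case should be ``$+o(\log(1-r))$'', but this does not affect the limit in (4)). The Lindel\"of/Lehto--Virtanen upgrade for $\angle\lim|h|=\infty$ is also fine.

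The real divergence from the paper is your treatment of $(4)\Rightarrow(1)$, and here you are working much harder than necessary, and your sketch has genuine gaps. First, when $\tau\in\D$ the image $h(\D)$ is in general only $G'(\tau)$-spirallike, not star-like, so Lemma~\ref{angulo} does not apply; moreover (4) controls only $\log|h(rx)|$, not $\arg h(rx)$, so there is no ``direction $\eta_0$'' available. Second, when $\tau\in\de\D$, right-translation invariance of $h(\D)$ does not by itself force $\delta(h(rx),\de h(\D))$ to stay bounded below as $\Re h(rx)\to-\infty$ (think of domains that pinch as $\Re w\to-\infty$), and (4) gives no control on $\Im h(rx)$; the ``nondegenerate prime end'' claim would need substantial extra work.

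The paper's route avoids all of this with a single observation you are missing: by Lemma~\ref{Co-Po} applied to $1/p$ (which also has nonnegative real part), the radial limit $\lim_{r\to1}(1-r)/p(rx)$ \emph{always} exists in $[0,+\infty)$. Since $x\neq\tau$, the factor $(\tau-rx)(1-\bar\tau rx)$ has a nonzero finite limit, so $\lim_{r\to1}(1-r)x/G(rx)$ exists in $\C$ unconditionally. With $g(r):=\Re h(rx)$ (case $\tau\in\de\D$) or $g(r):=\log|h(rx)/(rx-\tau)|$ (case $\tau\in\D$), this means $\lim_{r\to1} g'(r)(1-r)$ always exists, and l'H\^opital then gives, unconditionally,
\[
\lim_{r\to 1}\frac{g(r)}{-\log(1-r)}=\lim_{r\to 1} g'(r)(1-r).
\]
Thus (4) (left side nonzero) and (2) (right side nonzero) are equivalent in one stroke, and $\rho=1/\ell$ drops out immediately. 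In short: the derivative-side limit is not something you must squeeze out of (4) via Koebe and geometry; it exists \emph{a priori} by the Cowen--Pommerenke lemma, which makes l'H\^opital bidirectional.
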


\begin{proof}
Let $\hat{G}$ be the dual infinitesimal generator associated with $G$. If (1) is satisfied, by \eqref{GGhat} and Theorem \ref{main}, it follows that $x$ is a regular pole for $\hat{G}$, hence $x$ is a boundary regular null point for $G$ by Proposition \ref{BRNP-P}, and (2) holds.

By Proposition \ref{bsem}, (2) and (3) are equivalent.
Next, from Proposition \ref{Unival-VectorField},   (2) and (5) are equivalent.

In order to prove that (2) is equivalent to (4), let define $g(r):=\Re \log (h(rx)/(rx-\tau))$ in case $\tau \in \D$ and $g(r):=\Re h(rx)$ in case $\tau\in \de\D$, for $r\in (0,1)$. Thanks to Lemma \ref{Co-Po} we can use l'H\^opital's rule, and by  Proposition \ref{Unival-VectorField} it follows easily that
\[
\lim_{r\to 1} \frac{g(r)}{-\log (1-r)}=\lim_{r\to 1} g'(r)(1-r)=\begin{cases}
-\lim_{r\to 1}\frac{\Re G'(\tau)(xr-x)}{G(rx)} \quad \hbox{if}\ \tau\in  \D\\
-\lim_{r\to 1}\frac{xr-x}{G(rx)} \quad \hbox{if}\ \tau\in \de \D
\end{cases}
\]
from which the equivalence between (2) and (4).

Finally, if (2) holds, then a1) and a2) follow from a direct computation from Proposition~\ref{Unival-VectorField}.
\end{proof}

\begin{remark} We point out that the existence of the angular limit of $h$ at a boundary regular  fixed point was previously known. If $\tau\in \partial \D$, this was proved in \cite[page 268]{CD}.   If $\tau\in \D$, in \cite[Theorem 2]{CDP} it has been proved that actually $\lim_{z\to x} h(z)=\infty$ for any boundary fixed (non necessarily regular) point of the semigroup.
 \end{remark}

\section{Geometry near $\beta$-points of K\"onigs functions}\label{beta}

Let $h$ be the K\"onigs function of a semigroup $(\phi_{t})$ of holomorphic self-maps of $\D$ generated by the infinitesimal generator $G$. Let $x\in \de\D$ be a $\beta$-point of $(\phi_t)$, which, as we saw before, corresponds to a regular pole of $G$ and to a $\beta$-point of $h$. We want to understand how the image $h(\D)$ looks like near $h(x)$, or, better, which local geometry of $h(x)$ implies that $x$ is a regular pole.

We start with the following measure-theoretic consideration. Let $G(z)=(z-\tau)(1-\overline{\tau}z)p(z)$ be the Berkson-Porta decomposition of $G$. Then $p:\D \to \C$ is holomorphic and $\Re p(z)\geq 0$ for all $z\in \D$. Moreover, since we assumed that $x\in\de\D$ is a regular pole, it follows that in fact $p(z)>0$ for all $z\in \D$. Hence, $\Re (1/p(z))>0$ for all $z\in \D$. Therefore there exists a positive finite Borel measure $\mu$ on $\de \D$ such that
\begin{equation}\label{represent-p}
\frac{1}{p(z)}=\frac{1}{2\pi}\int_0^{2\pi}\frac{e^{it}+z}{e^{it}-z}d\mu(t).
\end{equation}

\begin{proposition}\label{mu-nc}
Let $G(z)=(z-\tau)(1-\overline{\tau}z)p(z)$ and let $x\in \de\D$. Let $\mu$ be the positive finite Borel measure on $\de\D$ which satisfies \eqref{represent-p}. Then $x=e^{i\theta}\in\Pl(G)$ if and only if
\begin{equation}\label{formulamu}
\begin{cases}
    \displaystyle{\lim_{(0,1)\ni r\to 1} \int_0^{2\pi}\frac{d\mu(t)}{|e^{it}-re^{i\theta}|^2}<+\infty,}  \\
     \displaystyle{\lim_{(0,1)\ni r\to 1}\int_0^{2\pi} \frac{\sin(t-\theta)}{|e^{it}-re^{i\theta}|^2}\frac{d\mu(t)}{1-r}=0.  }
  \end{cases}
\end{equation}
\end{proposition}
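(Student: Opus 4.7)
The plan is to translate the condition $x\in\Pl(G)$ into radial information about $(1-r)p(re^{i\theta})$, and then to read the latter off from the Herglotz representation \eqref{represent-p} of $1/p$.

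First I would apply Lemma~\ref{Co-Po} to $p$ itself (which has $\Re p\geq 0$ by Berkson--Porta) at the boundary point $x=e^{i\theta}$: it supplies $L\in[0,\infty)$ with $\angle\lim_{z\to x}\tfrac{1}{2}p(z)(1-\bar{x}z)=L$. Rewriting $1-\bar{x}z=\bar{x}(x-z)$, this reads $\angle\lim_{z\to x}p(z)(x-z)=2Le^{i\theta}$. Combined with the Berkson--Porta factorisation $G(z)=(z-\tau)(1-\bar\tau z)p(z)$, whose factor $(z-\tau)(1-\bar\tau z)$ has a finite nonzero limit as $z\to x$ whenever $x\neq\tau$, this gives the crucial equivalence: $x\in\Pl(G)$ if and only if $L>0$, with mass $C=2L|x-\tau|^{2}$. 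Along the radius this just says that $(1-r)p(re^{i\theta})$ tends, as $r\to 1^{-}$, to the positive real number $2L$. (The edge case $x=\tau\in\partial\D$ is handled separately: the factor $(z-\tau)(1-\bar\tau z)$ has a double zero there and dominates any simple pole of $p$, giving $x\notin\Pl(G)$ automatically.)

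Next I would expand $1/p(re^{i\theta})=u(r)+iv(r)$ via \eqref{represent-p} and the standard splitting
\[
\frac{e^{it}+z}{e^{it}-z}=\frac{1-|z|^{2}}{|e^{it}-z|^{2}}+i\,\frac{2\,\Im(ze^{-it})}{|e^{it}-z|^{2}},
\]
together with $\Im(re^{i(\theta-t)})=-r\sin(t-\theta)$. Dividing by $1-r$ (and using $1-r^{2}=(1+r)(1-r)$) yields
\[
\frac{u(r)}{1-r}=\frac{1+r}{2\pi}\int_{0}^{2\pi}\frac{d\mu(t)}{|e^{it}-re^{i\theta}|^{2}},\qquad \frac{v(r)}{1-r}=-\frac{r}{\pi(1-r)}\int_{0}^{2\pi}\frac{\sin(t-\theta)\,d\mu(t)}{|e^{it}-re^{i\theta}|^{2}}.
\]

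Finally, $(1-r)p(re^{i\theta})\to 2L>0$ translates, after passing to reciprocals, to $[u(r)+iv(r)]/(1-r)\to 1/(2L)$, a strictly positive \emph{real} number. This holds iff $u/(1-r)$ has a finite positive limit \emph{and} $v/(1-r)$ vanishes in the limit; via the displayed formulas, these are precisely conditions (i) and (ii) of \eqref{formulamu}. For the converse, if both integral conditions hold then $(u+iv)/(1-r)$ converges to $\pi^{-1}\lim_{r\to 1}\int d\mu(t)/|e^{it}-re^{i\theta}|^{2}$, which is a positive real number since $\mu$ is a nontrivial positive measure; inverting then gives $2L=\pi/\lim_{r\to 1}\int d\mu(t)/|e^{it}-re^{i\theta}|^{2}>0$, so $x\in\Pl(G)$.

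The main subtlety lies in understanding why condition (ii) is a \emph{vanishing} rather than merely a finiteness condition: this is forced by the \emph{reality} of $L$ in Cowen--Pommerenke, which requires the imaginary part of the limit of $[u+iv]/(1-r)$ to be zero. One should also note as a by-product of the argument the explicit quantitative relation $C=2L|x-\tau|^{2}=\pi|x-\tau|^{2}\bigl(\lim_{r\to 1}\int d\mu(t)/|e^{it}-re^{i\theta}|^{2}\bigr)^{-1}$ between the mass of the regular pole and the Herglotz measure $\mu$.
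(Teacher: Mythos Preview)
Your proof is correct and follows essentially the same route as the paper's: reduce via Cowen--Pommerenke (Lemma~\ref{Co-Po}) to $\lim_{r\to 1}(1-r)p(re^{i\theta})=2L>0$, invert, and split $1/((1-r)p)$ into real and imaginary parts through the Herglotz integral \eqref{represent-p}, noting that reality of the limit forces the imaginary part to vanish. You are somewhat more explicit about the converse direction and the edge case $x=\tau$, and you record the quantitative mass formula as a by-product, but the underlying argument is the paper's.
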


\begin{proof}
By definition of regular pole,  $\lim_{r\to 1}(1-r)|G(rx)|>0$. Hence, taking into account the Berkson-Porta decomposition of $G$, this is equivalent to $\lim_{r\to 1}(1-r)|p(rx)|>0$. By Lemma \ref{Co-Po} the last limit always exists finite and non-negative, thus   by (\ref{represent-p}),  $x$ is a regular pole of $G$ if and only if
$$
\lim_{r\to1}\frac{1}{1-r}\Re\left(\frac{1}{p(rx)}\right)=\frac{1}{\pi}\lim_{r\to1} \int_0^{2\pi}\frac{d\mu(t)}{|e^{it}-re^{i\theta}|^2}<+\infty,
$$
and
$$
\lim_{r\to1}\frac{1}{1-r}\Im\left(\frac{1}{p(rx)}\right)=\frac{1}{\pi}\lim_{r\to1}\frac{1}{1-r} \ \int_0^{2\pi}\frac{\sin(t-\theta)}{|e^{it}-re^{i\theta}|^2}\, d\mu(t)<+\infty.
$$
Finally, since  $\lim_{r\to 1}\frac{1}{(1-r)p(rx)}\in (0,+\infty]$, we have that $\lim_{r\to1}\frac{1}{1-r}\Im\left(\frac{1}{p(rx)}\right)<+\infty$ if and only if $\lim_{r\to1}\frac{1}{1-r}\Im\left(\frac{1}{p(rx)}\right)=0$.
\end{proof}

\begin{remark}
By Fatou's Lemma, we know that
$$
\int_0^{2\pi}\frac{d\mu(t)}{|e^{it}-e^{i\theta}|^2}\leq \lim_{(0,1)\ni r\to 1} \int_0^{2\pi}\frac{d\mu(t)}{|e^{it}-re^{i\theta}|^2}.
$$
Then, if the function $[0,2\pi]\ni t\mapsto \frac{1}{|e^{it}-e^{i\theta}|}$ does not belong to $L^2(\mu)$, the point $e^{i\theta}$ cannot be a regular pole of $G$.
\end{remark}

\subsection{Star-like functions and radial slits} Assume $0$ is the Denjoy-Wolff point of the semigroup and that $h$ is star-like with respect to the origin. This condition, due to Proposition \ref{Unival-VectorField}.(1), is equivalent to $p(0)>0$ and to $G'(0)<0$. Moreover, in such a case by Herglotz' representation formula we have
\begin{equation}\label{h-mu}
\frac{zh'(z)}{h(z)}=\frac{p(0)}{2\pi}\int_0^{2\pi}\frac{e^{it}+z}{e^{it}-z}d\mu(t),
\end{equation}
where $\mu$ is the positive finite Borel measure on $\de \D$ defined in \eqref{represent-p}.

Recall that any positive finite Borel measure on $\de \D$ is defined uniquely by a non-decreasing real function $\upsilon:[0,2\pi]\to \R$ such that $\upsilon(0)=0$ and $\upsilon(2\pi)=2\pi$.

According to \cite[Theorem 3.18 and formula (10) pag. 67]{P2}, the previous measure $\mu(t)$ is associated with the function
\[
\upsilon(t):=\lim_{r\to 1}\arg h(re^{it}),
\]
(the limit exists for all $e^{it}$).

A {\sl radial segment} is a set in $\C$ of the form $\{z\in \C: z=e^s z_0, s\in [a,b)\}$ with $a,b\in [-\infty, +\infty]$, $a<b$, and $z_0\in \C\setminus\{0\}$. The point $e^az_0$ is called the tip of the radial segment. We say that $h$ has a {\sl radial segment slit} if  $\de (h(\D))$ contains a radial segment such that any local arcwise connected component containing the tip remains arcwise connected when the tip is removed. A radial segment slit $R$ is ``isolated'' in $\de (h(\D))$ if $\de (h(\D))$ is locally arcwise connected at each point $p\in R$. By Carath\'eodory's theorem (see, {\sl e.g.}, \cite{P2}), if $h$ has a radial segment slit $R$ which is isolated in $\de (h(\D))$ (for short an ``isolated radial segment slit''), there exists an open connected non-empty arc $A\subset \de\D$ of positive Lebesgue measure such that $h$ extends continuously to $A$, $h(A)=R$ and a point $x$ in the interior of $A$ such that $h(x)$ is the tip of $R$. By the previous considerations,  $\upsilon(t)=\hbox{constant}$ for all $t\in A$, which, in turn, is equivalent to $\mu(A)=0$.

\begin{proposition}\label{plinio}
Let $G\in\Gen$, with associated K\"onigs function $h:\D\to \C$. Assume that $G(0)=0$ and $G'(0)<0$. Suppose $h$ has an isolated radial segment slit  whose tip is $h(e^{i\theta_0})$, for some $e^{i\theta_0}\in \de\D$. Then $e^{i\theta_0}\in \Pl(G)$.
\end{proposition}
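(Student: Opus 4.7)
The plan is to show directly that $x_0:=e^{i\theta_0}$ is a $\beta$-point of the K\"onigs function $h$ and then invoke the equivalence $(1)\Leftrightarrow(2)$ of Theorem~\ref{main2} to deduce $x_0\in\Pl(G)$. Set $w_0:=h(x_0)$ and $v:=w_0/|w_0|$. Since $h(\D)$ is star-like at the origin and the radial slit with tip $w_0$ is isolated, the slit extends radially outward from $w_0$, and there is $\epsilon>0$ such that
\[
B(w_0,\epsilon)\cap h(\D)=B(w_0,\epsilon)\setminus\{w_0+sv:s\in[0,\epsilon)\}.
\]
By Carath\'eodory's extension theorem, applicable because the isolation of the slit means $\de h(\D)$ is locally arcwise connected along it, $h$ extends continuously to an open arc $A\subset\de\D$ containing $x_0$, with $h(x_0)=w_0$ and $h(A)$ tracing both sides of the slit.

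On the one-sided relative neighborhood $U:=h^{-1}(B(w_0,\epsilon)\cap h(\D))\subset\D$ of $x_0$ I would then introduce the auxiliary function
\[
F(z):=\sqrt{(h(z)-w_0)/v},
\]
choosing the branch of the square root cut along $[0,+\infty)$ so that $F(U)$ lies in the upper half-plane (the image of $U$ under $w\mapsto(w-w_0)/v$ is a disk around $0$ minus the positive real segment $[0,\epsilon)$, on which such a branch is well defined and holomorphic). By construction, $F$ extends continuously from $U$ to $U\cup A$ and sends $A$ into $\R$: the two sides of the slit are mapped by $w\mapsto(w-w_0)/v$ to the interval $[0,\epsilon)$, and then by the square root to $\pm\sqrt{s}\in\R$, meeting at $F(x_0)=0$. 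The Schwarz reflection principle across the analytic arc $A\subset\de\D$ then produces an analytic extension $\widetilde F$ of $F$ to a full open disk $D(x_0,\rho)\subset\C$, and this extension is conformal at $x_0$, so that $c:=\widetilde F'(x_0)\neq 0$. Consequently
\[
h(z)-w_0=v\widetilde F(z)^2=vc^2(z-x_0)^2+O\bigl((z-x_0)^3\bigr),
\]
which differentiated yields $h'(z)=2vc^2(z-x_0)+O((z-x_0)^2)$, whence $|h'(z)|/|z-x_0|\to 2|c|^2<+\infty$ as $z\to x_0$. This is precisely the statement that $x_0\in\NC(h)$, and Theorem~\ref{main2} gives $x_0\in\Pl(G)$.

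The main obstacle I expect is purely bookkeeping: one has to pin down the branch of the square root so that $F$ is well defined and holomorphic on $U$, verify that the continuous extension of $F$ across $A$ really lands on the real axis rather than jumping between the two sides of the slit, and justify the applicability of Schwarz reflection across $A$, which requires the isolation hypothesis to know that $F$ has genuine continuous real boundary values on the arc. Once these points are settled the conclusion follows from the Taylor expansion above and the direct application of Theorem~\ref{main2}.
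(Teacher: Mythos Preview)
Your argument is correct. The one step you should make explicit is the conformality of $\widetilde F$ at $x_0$: since $h$ is univalent on $U$ and $w\mapsto\sqrt{(w-w_0)/v}$ is a conformal bijection of the slit disk onto an upper half-disk, $F$ is univalent on $U$; the reflected map $\widetilde F$ is then univalent on the symmetric neighbourhood $U\cup A\cup U^\ast$, whence $\widetilde F'(x_0)\neq 0$.

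The paper proceeds differently. It does mention your route in passing---reduce to $x_0\in\NC(h)$ and invoke Theorem~\ref{main2}, citing the well-known fact (Bertilsson) that slit tips have positive $\beta$-number---but then gives an independent proof via the measure-theoretic criterion of Proposition~\ref{mu-nc}. Since the Herglotz measure $\mu$ associated with $1/p$ vanishes on the arc $A$, the first integral in \eqref{formulamu} is trivially finite; for the second, the paper sets $k(\theta,r)=\int\sin(t-\theta)|e^{it}-re^{i\theta}|^{-2}\,d\mu(t)$, observes that $k(\theta_0,1)=0$ (because $h'\to 0$ radially at the tip, via \eqref{h-mu}) and that $\partial k/\partial r$ is bounded near $(\theta_0,1)$, so the mean value theorem gives $k(\theta_0,r)/(1-r)$ bounded, hence $\to 0$. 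Your approach is more elementary and yields the sharp local expansion $h(z)-w_0\sim vc^2(z-x_0)^2$; the paper's approach is tailored to showcase Proposition~\ref{mu-nc} and the Herglotz machinery, which is precisely the toolkit used in the subsequent examples (notably Example~\ref{no-tip}, where no local conformal model is available).
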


\begin{proof}
By Theorem \ref{main2}, $x$ is a regular pole of $G$ if and only if it is a $\beta$-point for $h$, and it is well known (see, {\sl e.g.}, \cite{Be}) that the tip of an isolated radial segment slit has positive $\beta$-number, which by Bertilsson's \cite[Theorem 2.1]{Be}, means that it is the image of a $\beta$-point of $h$. However, we provide here a different proof of this fact based on Proposition~\ref{mu-nc}.

Thus, suppose $R$ is an isolated radial segment slit for $h$, corresponding to the open connected arc $A\subset \de\D$ of positive Lebesgue measure. Since $\mu(A)=0$, for any $e^{i\theta}\in A$, the first equation in \eqref{formulamu} is satisfied.

Now, for $e^{i\theta}\in A$, let
\[
k(\theta,r):=\int_0^{2\pi} \frac{\sin (t-\theta)}{|e^{it}-re^{i\theta}|^2}d\mu(t).
\]
Using the Lebesgue dominated convergence theorem, one can prove that   $\frac{\de k(\theta,1)}{\de \theta}>0$ (recall that $A$ is open in $\de \D$). Thus, $k(\theta,1)$ has at most one zero in  $A$. It is well known (see also Lemma \ref{local} below for a general argument) that, if $e^{i\theta_0}$ is a tip of a radial segment, then $\lim_{r\to 1}h'(re^{i\theta_0})=0$. Therefore, from \eqref{h-mu} we have $k(\theta_0,1)=0$.

Now, another computation, using again the Lebesgue dominated converge theorem, shows that $\frac{\de k(\theta,1)}{\de r}>0$, hence, by the mean value theorem, it follows that
\[
k(\theta_0,r)/(1-r)\leq C<+\infty.
\]
 Since $\lim_{r\to 1}(1-r)p(rx)\in (0,+\infty]$ by Lemma \ref{Co-Po}, from \eqref{represent-p} and \eqref{h-mu}, the previous condition is equivalent to  $k(\theta_0,r)/(1-r)\to 0$, and by Proposition \ref{mu-nc} we are done.
\end{proof}

\begin{remark}
Note that, as it is clear from geometric considerations, any point of an isolated radial segment slit but the tip is not a regular pole for the infinitesimal generator.
\end{remark}

\subsection{Other examples} In some cases, one can ``localize'' the problem of understanding which points are  $\beta$-points by means of the following trick:

\begin{lemma}\label{local}
Let $h:\D \to \C$ be univalent. Let $A\subset \de \D$ be a connected arc of positive Lebesgue measure. Assume that $h$ extends continuously on $A$. Let $g:\D \to \C$ be another univalent map  such that $g(\D)\subset h(\D)$, and assume there exists $A'\subseteq A$ a connected arc of positive Lebesgue measure such that $g$  extends continuously on $A'$ and $g(A')=h(A')$. Let $x$ be a point in the interior of $A'$  such that $g(x)=h(x)$. Then $x\in \NC(h)$ if and only if $x\in \NC(g)$.
\end{lemma}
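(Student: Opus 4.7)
The natural reduction is to consider the univalent self-map $f:=h^{-1}\circ g\colon\D\to\D$, well-defined because $g(\D)\subset h(\D)$ and $h$ is univalent. From $g=h\circ f$ the chain rule gives the identity
\[
\frac{|g'(z)|}{|z-x|} = |f'(z)|\cdot\frac{|h'(f(z))|}{|f(z)-x|}\cdot\frac{|f(z)-x|}{|z-x|},
\]
so once it is known that $f$ extends holomorphically to a neighbourhood of $x$ in $\C$ with $f(x)=x$ and $f'(x)\neq 0$, both $|f'(z)|$ and $|f(z)-x|/|z-x|$ converge to the nonzero constant $|f'(x)|$ as $z\to x$ non-tangentially, and the local biholomorphism $f$ at $x$ sends each non-tangential approach region at $x$ into a non-tangential approach region at $x$. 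Reading off the displayed identity in both directions then yields the equivalence $x\in\NC(h)\Leftrightarrow x\in\NC(g)$, with comparable masses up to the factor $|f'(x)|^2$.

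The main task is therefore to produce such a conformal extension of $f$ across $x$. First I would show that $f$ extends continuously to a sub-arc $A''\subseteq A'$ containing $x$ in its interior, with $f(A'')\subset\de\D$: since $h$ is continuous on $A$ and $x$ is in the interior of $A'$, shrinking $A'$ if necessary one uses Carath\'eodory's theorem to ensure that $h$ is a homeomorphism from $A''$ onto $h(A'')$, so that $h^{-1}$ is continuous on $h(A'')$. Composing with the continuous extension of $g$ to $A''$, whose values lie in $g(A')=h(A')$, one gets that $f=h^{-1}\circ g$ extends continuously to $A''$ with image in $A\subset\de\D$, and in particular $f(x)=h^{-1}(h(x))=x$. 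Since $f$ is holomorphic on $\D$, continuous on $\D\cup A''$, and $|f|\equiv 1$ on $A''$, the Schwarz reflection principle extends $f$ to a holomorphic map on a neighbourhood $U$ of $x$ in $\C$ via $f(z)=1/\overline{f(1/\overline{z})}$ for $z\in U\setminus\oD$. Because $f|_\D$ is univalent and the reflected piece takes values in $\C\setminus\oD$, which is disjoint from $f(\D)\subset\D$, the extended $f$ is univalent on $U$; in particular $f'(x)\neq 0$, which is the missing ingredient.

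The step I expect to require the most care is the continuous extension of $f$ on a boundary arc: a priori $h$ could identify two different points of $A'$ (as happens at the tips of slits, or more generally when several prime ends share the same impression), so one has to select the sub-arc $A''$ carefully around $x$ to rule this out, and this is exactly where the assumption that $x$ lies in the \emph{interior} of $A'$ is crucial. Once $A''$ is fixed, everything else is a routine combination of Schwarz reflection, the chain rule, and the fact that a local biholomorphism at a boundary point preserves both the non-tangential regions and the order of vanishing of derivatives, and the argument is manifestly symmetric in the roles of $g$ and $h$.
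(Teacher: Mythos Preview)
Your approach is essentially the same as the paper's: form $\varphi=h^{-1}\circ g$, extend it across the arc by Schwarz reflection, verify $\varphi(x)=x$ with $\varphi'(x)\neq 0$, and read off the equivalence from the chain-rule identity. Two minor differences in implementation: the paper invokes Lindel\"of's theorem directly to get the continuous extension of $\varphi$ to $A'$ with values in $\de\D$ (rather than going through Carath\'eodory and a local inverse of $h$, which as you note requires care since $h$ need not be injective on $A'$), and it obtains $\varphi'(x)>0$ in one stroke from Julia's lemma applied to the self-map $\varphi:\D\to\D$ at the boundary fixed point $x$, rather than by arguing global univalence of the reflected extension. Both routes are valid, but the paper's choices are slightly cleaner.
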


\begin{proof}
The function $\v:=h^{-1}\circ g: \D \to \D$ is univalent. Moreover, by hypothesis, using Lindel\"of's theorem, it is not difficult to see that $\v$ extends continuously on $A'$ and $\v(A')\subset \de \D$. Then, by Schwarz' reflection principle,  $\v$ extends holomorphically through $A'$. In particular, $\v(x)=x$ and $\v'(x)$ exists finite. By Julia's lemma, $\v'(x)>0$ and $(0,1)\ni r\mapsto \v(rx)$ is a non-tangential curve approaching $x$. Hence for $r\in (0,1)$ we have
\[
\frac{|h'(\v(rx))|}{|x-\v(rx)|}\frac{|x-\v(rx)|}{1-r}|\v'(rx)|=\frac{|g'(rx)|}{1-r}
\]
and thus, taking the limsup as $r\to 1$, it is clear that $x\in \NC(h)$ if and only if $x\in \NC(g)$.
\end{proof}

Now we present a couple of examples of images which produce regular poles:

\begin{example}
Let $\upsilon(t):=0$ for $0\leq t\leq \pi$ and $\upsilon(t):=2(t-\pi)$ for $\pi\leq t\leq 2\pi$. Let $\mu$ be the positive finite Borel measure on $\de\D$ defined by $\upsilon$. And let $h:\D\to \C$ be the function star-like with respect to $0$   which satisfies \eqref{h-mu} with $h'(0)=1$. As $\upsilon$ represents the argument of (the non-tangential limit of) $h(e^{it})$ for $t\in [0,2\pi]$, we see that $h(\D)$ is a Jordan domain minus a radial segment slit, whose tip is $h(i)$. By Proposition \ref{plinio}, we know that $i$ is a regular pole (in fact it is the unique regular pole) for the semigroup $(h^{-1}(e^{-t} h(z)))_{t\geq 0}$. It is however interesting to see the actual computations in such a simple case using directly Proposition \ref{mu-nc}:
$$
\int_0^{2\pi}\frac{d\upsilon(t)}{|e^{it}-re^{i\pi/2}|^2}=2\int_\pi^{2\pi}\frac{dt}{|e^{it}-re^{i\pi/2}|^2} \leq \frac{2}{1+r^2}\pi
$$
and
$$
\int_0^{2\pi} \frac{\sin(t-\pi/2)}{|e^{it}-re^{i\theta}|^2}d\upsilon(t)=2\int_\pi^{2\pi} \frac{\sin(t-\pi/2)}{|e^{it}-re^{i\theta}|^2}dt=0.
$$
\end{example}

\begin{example}
Let $\upsilon(t):=-\pi^{1-\al}(-t+\pi)^\al$ for $0\leq t\leq \pi$ and $\upsilon(t):=\pi^{1-\al}(t-\pi)^\al$  for $\pi\leq t\leq 2\pi$, with $\al>0$.  Let $\mu$ be the positive finite Borel measure on $\de\D$ defined by $\upsilon$. Let $G(z)=-zp(z)$ where $p(z)$ is defined by \eqref{represent-p}. A direct computation shows that $\mu$ satisfies the hypothesis of  Proposition \ref{mu-nc} at $\pi$ if and only if $\al>2$. Thus $\upsilon(\pi)=1\in \Pl(G)$ if and only if $\al>2$. Moreover, by Fatou's lemma, since $\upsilon$ is differentiable at every point but at most $t=\pi$, it follows that $\angle\lim_{z\to e^{it}}1/p(z)=\upsilon'(t)\neq 0$ for all $t\in [0,2\pi]\setminus\{\pi\}$. Thus $e^{i\theta}$ is not a boundary regular null point for $G$ for all $\theta\in (0,2\pi)$.

If $h:\D \to \C$ is the K\"onigs function associated with $G$, the image $h(\D)$ is, for $0<\al<1$ a disc with a ``cusp exiting from $1$'' and, for $\al>1$ a disc minus a ``cusp entering from $1$'' (see Figure 1) 
\begin{figure}
\centering
\fbox{
\includegraphics[height=3.0cm]{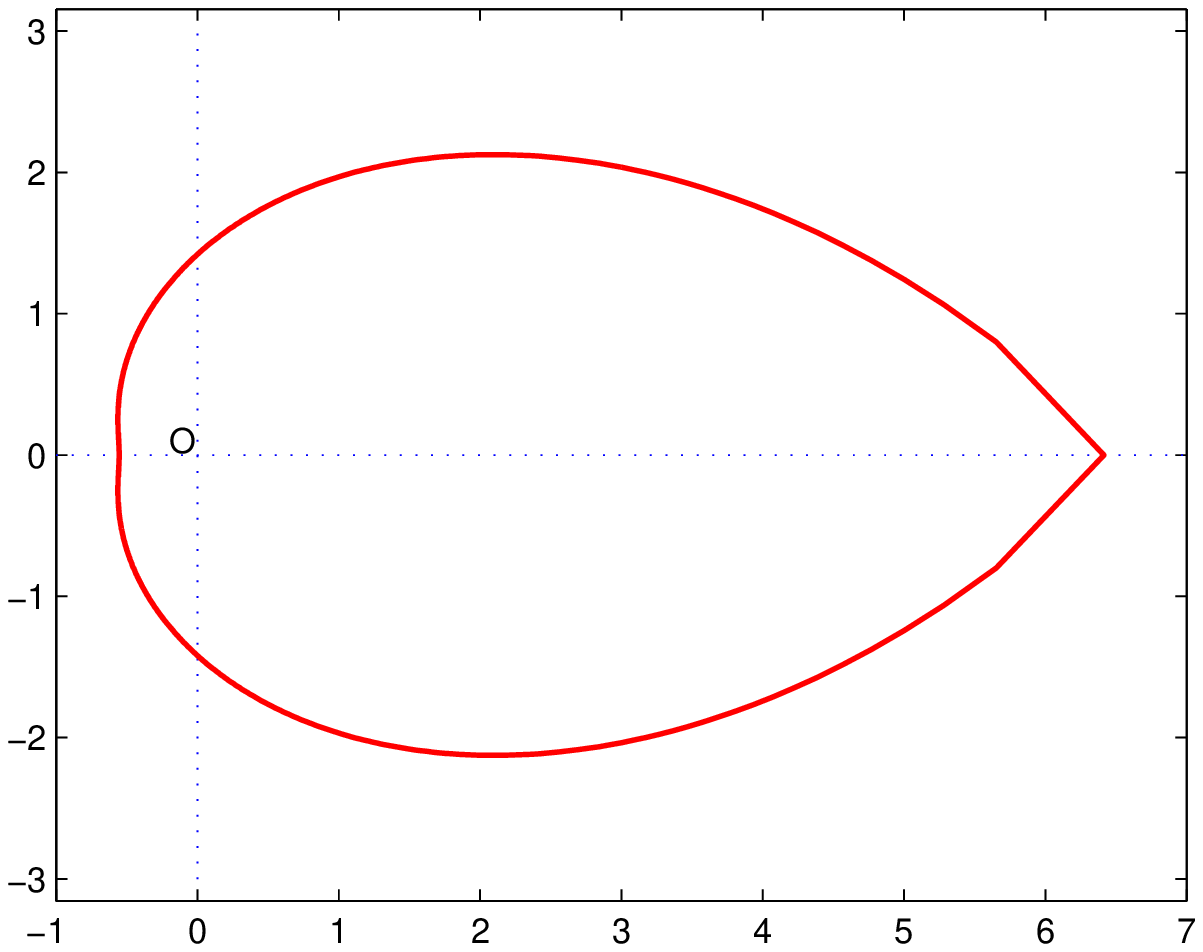}}
\hspace{2cm}
\fbox{
\includegraphics[height=3.0cm]{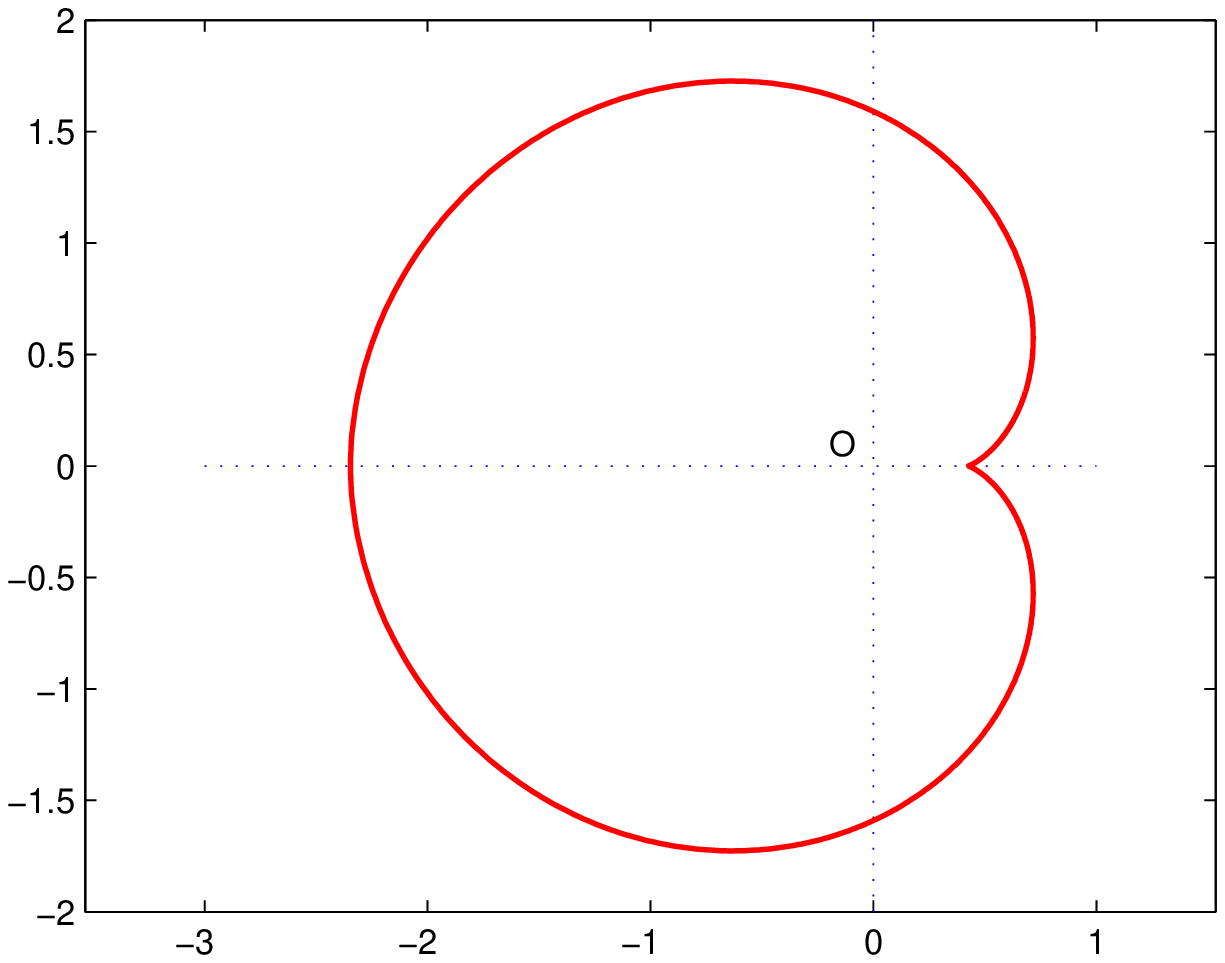}
}
\caption{$\al=1/2$ and $\alpha=2$}
\end{figure}
\end{example}

The previous examples and a direct application of Lemma \ref{local} allow to prove the following result:

\begin{proposition}
Let $G\in \Gen$. Let $h:\D \to \C$ be the K\"onigs function associated with $G$. Assume that $\de(h(\D))$ contains a curve $\Gamma$ which can be parameterized by a continuous function $\gamma:(-1,1)\to \Gamma$ such that, up to rigid movements of $\C$,
\begin{enumerate}
  \item either  $\arg \gamma(t)=\hbox{constant}$ (that is $\Gamma$ is a linear slit)
  \item or,  $\arg \gamma(t)=-\pi^{1-\al}(-t)^\al$ for $-1< t\leq 0$ and $\arg \gamma(t):=\pi^{1-\al}t^\al$ for $0< t< 1$, with some $\al>2$ (that is $\Gamma$ is a cusp slit with vertex at $\gamma(0)$) and $h(\D)$ contains an angle of (any) positive amplitude at $\gamma(0)$.
\end{enumerate}
If $\Gamma$ is locally arcwise connected in $\C\setminus h(\D)$ then $\gamma(0)$ is a regular pole of $G$.
\end{proposition}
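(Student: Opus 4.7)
By Theorem \ref{main2} it is enough to show that any boundary pre-image $x\in\de\D$ of $\gamma(0)$ under $h$ (that is, $\angle\lim_{z\to x}h(z)=\gamma(0)$) is a $\beta$-point of $h$. The strategy is to build, in each of the two cases, an auxiliary univalent map $g:\D\to\C$ with $g(\D)\subset h(\D)$ whose image fits the hypotheses of one of the two star-like model examples above, and then to transfer the $\beta$-point property from $g$ to $h$ by means of Lemma \ref{local}.

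The local arcwise connectedness of $\Gamma$ in $\C\setminus h(\D)$ guarantees that for every sufficiently small $\varepsilon>0$ the disc $D(\gamma(0),\varepsilon)$ meets $\de h(\D)$ only along a sub-arc $\Gamma_\varepsilon\subseteq\Gamma$ containing $\gamma(0)$ and crossing $\de D(\gamma(0),\varepsilon)$. Consequently $\Omega':=D(\gamma(0),\varepsilon)\setminus\Gamma_\varepsilon$ is a Jordan-slit (hence simply connected) subdomain of $h(\D)$. Let $g:\D\to\Omega'$ be a Riemann map. By Carath\'eodory's theorem, $g$ extends continuously to $\oD$ and the two boundary arcs that it sends into the slit meet at a single pre-image of the tip $\gamma(0)$. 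Precomposing $g$ with an automorphism of $\D$ we arrange that this pre-image equals $x$; fix a short arc $A'\subset\de\D$ about $x$ on which $g$ maps into one side of $\Gamma_\varepsilon$. One-sided accessibility of $\Gamma$ from $h(\D)$ shows that $h$ also extends continuously to $A'$ into the same side of $\Gamma$, and after further shrinking $A'$ we have $g(A')=h(A')$.

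In case (1), after a rigid motion $\Omega'$ is a disc with an isolated radial segment slit whose tip is $\gamma(0)$, so Proposition \ref{plinio} applied to the K\"onigs function of the radial-shrinking semigroup on $\Omega'$, combined with Theorem \ref{main2}, yields $x\in\NC(g)$. In case (2) the positive-amplitude angle hypothesis allows us to inscribe in $h(\D)$, near $\gamma(0)$, a star-like comparison domain of precisely the form of the cusp example above, with the same exponent $\alpha>2$; the measure computation given there (via Proposition \ref{mu-nc}) shows that the pre-image of $\gamma(0)$ under the Riemann map of this domain is a regular pole of the associated infinitesimal generator, and Theorem \ref{main2} again converts this into $x\in\NC(g)$. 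In both cases Lemma \ref{local} now gives $x\in\NC(h)$, and Theorem \ref{main2} concludes that $\gamma(0)\in\Pl(G)$.

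The most delicate point is case (2): one has to fit inside $h(\D)$ a star-like domain that genuinely realises the $\alpha$-cusp geometry of the example at $\gamma(0)$. This is exactly what the hypothesis ``$h(\D)$ contains an angle of positive amplitude at $\gamma(0)$'' buys, since it provides the wedge between the two sides of the cusp that is needed to accommodate the star-like model. In case (1) the analogous step is trivial because the radial-slit model of Proposition \ref{plinio} imposes no further constraint on $h(\D)$ beyond the slit itself.
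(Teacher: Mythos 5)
Your proof follows exactly the route the paper intends --- the paper's entire ``proof'' is the one-line remark that the two preceding examples together with Lemma \ref{local} yield the result --- so you have simply filled in the details of the same argument: construct a comparison domain inside $h(\D)$ modeled on the isolated radial segment slit example (handled via Proposition \ref{plinio}) or on the $\al$-cusp example (handled via Proposition \ref{mu-nc}), and transfer the $\beta$-point property with Lemma \ref{local} and Theorem \ref{main2}. The only slip worth flagging is the phrase that $g$ maps $A'$ ``into one side of $\Gamma_\varepsilon$'': an arc about the tip-preimage $x$ necessarily covers both sides of the slit, but this does not affect the application of Lemma \ref{local}.
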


\section{Radial multi-slits semigroups}\label{ultima}

Let $\Gamma$ be a {\sl radial slit}, namely $\Gamma=\{z\in \C: z=r T, r\in [s,+\infty)\}$, where $T\in \C\setminus \{0\}$ and $s>0$. A {\sl radial $m$-slit domains} $\Omega$ is given by $\C\setminus \cup_{j=1}^m \Gamma_j$ where the $\Gamma_j$'s are (different) radial slits. As $\Omega$ is star-like with respect to $0$, there exists a unique Riemann map $h:\D \to \Omega$ such that $h(0)=0$, $h'(0)>0$. Up to dilation, we can, and we will, assume that $h'(0)=1$. Since $\de\Omega$ is locally arcwise connected, $h$ extends continuously (as a map with values in $\C\mathbb P^1$) up to $\de\D$.

We call $T_1$ one of the tips of the $m$ radial slits with minimal real part, and label the other tips $T_j$, $j=2,\ldots, m$,  in such a way that $T_j$ follows $T_{j-1}$ clockwise. We let $a_1,\ldots,a_m\in \de \D$ be such that $h(a_j)=T_j$ (as the $T_j$'s are not cut-points, there is only one pre-image of each $T_j$). Let $b_1,\ldots, b_m\in \de \D$ denote the $m$ pre-images of $\infty$ under $h$ ($\infty$ is a cut-point of multiplicity $m$, thus there are exactly $m$ preimages). It is clear that in the arc $[a_j,a_{j+1}]$ (with $j$ counted mod $m$) there is only one of such points, and we label them in such a way that $b_j\in [a_j,a_{j+1}]$ for $j=1,\ldots, m-1$ and $b_m\in [a_{m}, a_1]$. Also, let $2\pi \sigma_j$, with $\sigma_j>0$, $j=1,\ldots, m-1$ be  the amplitude of the angle formed by vectors $T_j-0$ and $T_{j+1}-0$ for $j=1,\ldots, m-1$ and let $2\pi\sigma_m$ be the amplitude of the angle formed by the vectors $T_m-0$ and $T_{1}-0$. By definition, $\sum_{j=1}^m \sigma_j=1$.

Consider the semigroup of holomorphic self-maps of $\D$ defined by $\phi_t:=h^{-1}(e^{-t}h(z))$ and call it a {\sl radial $m$-slits semigroup}. Clearly, $0$ is its Denjoy-Wolff point.

\begin{proposition}\label{forma-pradial}
Let $(\phi_t)$ be a radial $m$-slits semigroup, and let $G(z)=-zp(z)$ be the associated infinitesimal generator. Then $G$ has $m$ boundary regular null points $b_1,\ldots, b_m\in\de\D$ which  dilations, respectively, $2\sigma_1,\ldots, 2\sigma_m$. Also, $G$ has $m$ regular poles $a_1,\ldots, a_m$ with mass, respectively, $2\mu_1,\ldots, 2\mu_m>0$ such that $\sum_{j=1}^m \mu_j=1$. Moreover,
\begin{equation}\label{p-1sup}
p(z)=\sum_{j=1}^m \mu_j \frac{a_j+z}{a_j-z}, \quad \frac{1}{p(z)}=\sum_{j=1}^m \sigma_j \frac{b_j+z}{b_j-z}.
\end{equation}
Conversely, if $p(z)$ is given by \eqref{p-1sup}, then the infinitesimal generator $G(z)=-zp(z)$ generates a radial $m$-slits semigroup.
\end{proposition}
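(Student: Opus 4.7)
The plan is to compute the Herglotz measure in \eqref{represent-p} from the geometry of the slit configuration, then to obtain the analogous representation of $p$ itself via partial-fraction inversion combined with positivity, and finally to reverse the construction for the converse. Since $h(\D)$ is star-like with respect to $0$, the K\"onigs function $h$ is star-like and satisfies \eqref{h-mu}. The normalization $\phi_t'(0)=e^{-t}$ forces $G'(0)=-1$ and hence $p(0)=1$. I would then analyze the boundary trace of $h$: on each arc $(b_{j-1},b_j)$ (which contains $a_j$), the curve $h(e^{it})$ sweeps along the slit $\Gamma_j$ from infinity down to $T_j$ and back to infinity, so $\arg h\equiv\arg T_j$ there, while the jump of $\upsilon(t):=\lim_{r\to 1}\arg h(re^{it})$ at $b_j$ equals the angular separation $2\pi\sigma_j$ between $T_j$ and $T_{j+1}$. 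Thus $d\upsilon=\sum_{j=1}^m 2\pi\sigma_j\,\delta_{b_j}$, and substituting into \eqref{h-mu} yields
\[
\frac{1}{p(z)}=\sum_{j=1}^m \sigma_j\,\frac{b_j+z}{b_j-z},\qquad \sum_{j=1}^m\sigma_j=1.
\]
Expanding near each $b_j$ shows that $G(z)/(z-b_j)$ has a nonzero finite limit, identifying each $b_j$ as a boundary regular null point of $G$ with the stated dilation.

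For the representation of $p$, I would observe that $p=1/(1/p)$ is rational, bounded at $\infty$, and has $\Re p>0$ in $\D$, so its poles lie on $\partial\D$: none inside $\D$ by positivity, and none outside $\oD$ either via the symmetry $\overline{(1/p)(1/\bar z)}=-(1/p)(z)$ which pairs zeros of $1/p$ across $\partial\D$. These $m$ poles are precisely the points $a_j$ where the denominator $\sum_k \sigma_k(b_k+z)/(b_k-z)$ vanishes. Partial-fraction decomposition combined with the Herglotz integral representation of $p$ (available because $\Re p>0$ and $p(0)>0$) forces the representing measure to be atomic with support exactly $\{a_1,\ldots,a_m\}$, giving $p(z)=\sum_j \mu_j(a_j+z)/(a_j-z)$ with $\mu_j>0$, and $\sum\mu_j=p(0)=1$. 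A residue calculation produces $\lim_{z\to a_j}G(z)(z-a_j)=2\mu_j a_j^2$, identifying each $a_j$ as a regular pole of $G$ with mass $2\mu_j$.

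For the converse, given $p$ of the stated form one has $\Re p>0$ in $\D$ and $p(0)=1$, so the Berkson--Porta formula gives $G=-zp\in\Gen$ with Denjoy--Wolff point $0$ and $G'(0)=-1$; the associated K\"onigs function $h$ is therefore star-like with $h(0)=0$ and $h'(0)=1$. Running the preceding argument backwards, the partial-fraction decomposition $1/p(z)=\sum_j\sigma_j(b_j+z)/(b_j-z)$ has distinct $b_j\in\partial\D$, $\sigma_j>0$, and $\sum\sigma_j=1$; by \eqref{h-mu} this identifies $\upsilon$ as the step function with jumps $2\pi\sigma_j$ at $b_j$, so $\arg h$ is piecewise constant on $\partial\D$. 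Integrating $\tfrac{d}{dr}\log|h(re^{it})|=\tfrac{1}{r}\Re(1/p(re^{it}))$ shows that $|h(re^{it})|\to\infty$ precisely at the $b_j$, forcing $h$ to map each complementary arc onto a radial ray; hence $h(\D)$ is the complement of $m$ distinct radial slits.

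The main obstacle is the rigorous identification in the forward direction of $\upsilon$ as a pure step function supported on $\{b_1,\ldots,b_m\}$, with jumps exactly $2\pi\sigma_j$. This combines the tangential constancy of $\arg h$ on each arc between consecutive $b_j$'s (from the geometry of the slit, using continuity of $h$ on $\partial\D$ away from $\{b_j\}$) with the total-winding computation that forces the $m$ jumps to sum to $2\pi$. Once this geometric input is secured, the rest is a clean interplay of partial fractions, the Herglotz integral, and the positivity $\Re p>0$.
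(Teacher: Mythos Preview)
Your outline is correct and runs largely parallel to the paper's proof: both obtain the formula for $1/p$ by reading off the step function $\upsilon(t)=\lim_{r\to1}\arg h(re^{it})$ on the slit boundary, and both deduce the converse from the fact that an atomic Herglotz measure for $1/p$ forces $\upsilon$ to be piecewise constant, hence the K\"onigs image to be a radial multi-slit. Two points of departure are worth recording. For the representation of $p$, the paper appeals to Proposition~\ref{plinio} to certify that each tip preimage $a_j$ is a regular pole of $G$, subtracts $\sum_j\mu_j\frac{a_j+z}{a_j-z}$ from $p$, notes the remainder is a bounded rational function on $\C\mathbb P^1$ hence constant, and shows this constant is both nonnegative (Lemma~\ref{Co-Po}) and purely imaginary (evaluate at a zero $b_1$ of $p$), hence zero. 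You instead argue that $p=1/(1/p)$ is rational with all poles on $\partial\D$ via the symmetry $\overline{p(1/\bar z)}=-p(z)$ and invoke the Herglotz representation directly. This is fine, but you still owe the identification of those poles with the geometrically defined $a_j$; it follows at once from $zh'/h=1/p$ together with $h'(a_j)=0$ at a tip, or simply from Proposition~\ref{plinio}. For the converse, the paper passes through the dual generator $-z/p(z)$: since its Herglotz function $1/q=p$ is atomic by hypothesis, the dual K\"onigs image is a multi-slit domain, so the already-proved forward direction yields $1/p=q$ in atomic form, after which one repeats the argument for $-zp$. Your route---showing directly that the zeros of $p$ are simple, lie on $\partial\D$, and interlace with the $a_j$, then applying Herglotz to $1/p$---avoids this detour through the dual and is equally valid.
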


\begin{proof}
As recalled before, by  \cite[Theorem 3.18 and formula (10) pag. 67]{P2}, the measure $\mu(t)$ which represents $1/p$ in \eqref{represent-p} (and  given by \eqref{h-mu}) is $\mu(t)=d\upsilon(t)$ where $\upsilon(t)=\lim_{r\to 1}\arg h(re^{it})$. Hence, it follows easily that $\mu(t)=\sum_{j=1}^m \sigma_j \delta_{b_j}$ where $\delta_{b_j}$ is the Dirac measure concentrated in $b_j$ of mass $1$, from which \eqref{p-1sup} for $1/p(z)$.

Now we want to show that $p(z)$ has the claimed form. By Proposition \ref{plinio}, the points $a_1,\ldots, a_m$ are regular poles of $G$, with some mass $\mu_1,\ldots, \mu_m>0$. Since $1/p(z)=-1/\overline{p(1/\overline{z})}$ for all $z\in \C$, it follows that these are the only poles of $p$. Hence the map $q(z)=p(z)-\sum_{j=1}^m \mu_j (a_j+z)/(a_j-z)$ is a bounded rational map of $\C \mathbb P^1$, thus it is constant. Now, by Lemma \ref{Co-Po}, $\Re q(z)\geq 0$ for all $z\in \D$, hence, $q(z)\equiv r\geq 0$. Now, for $z=b_1$ (one of the zeros of $p$) we have
\[
r=p(b_1)-\sum_{j=1}^m \mu_j (a_j+b_1)/(a_j-b_1)=-\sum_{j=1}^m \mu_j (a_j+b_1)/(a_j-b_1)\in i\R,
\]
from which it follows that $r=0$, and $p$ has the desired form.

To end up the proof, assume that $p$ satisfies \eqref{p-1sup}. Then let $q(z)=1/p(z)$. By  \eqref{represent-p},  it follows that the measure $\mu(t)$ associated with $q(z)$ has atoms at $a_1,\ldots, a_m$ with mass $\mu_1,\ldots, \mu_m$ respectively. Let $g:\D \to \C$ be the K\"onigs map associated with $-zq(z)$ such that $g(0)=0, g'(0)=1$. Hence, the function $\upsilon(t):=\lim_{r\to 1} g(re^{it})$ by \eqref{h-mu} and \cite[Theorem 3.18 and formula (10) pag. 67]{P2}, is constant on the arc $(a_j,a_{j+1})$ ($j=1\ldots, m$ mod $m$), therefore the associated domain is a radial multi-slits domain. Applying what we already proved, we obtain that $q(z)$ is of the form given in \eqref{p-1sup}, hence, $1/p(z)=q(z)$ is of the same type and, repeating the above argument, we obtain that $-zp(z)$ generates a radial multi-slit domain.
\end{proof}

\begin{remark}
The form of the generating (that also, in general, might be time-dependent) holomorphic vector field  $-zp(z)$ in case of multi-slits can also be deduced using Loewner's differential equation (see, {\sl e.g.}, \cite{Pe}). The simpler proof we gave here in the star-like case seems to be new, and the result gives a new insight on the geometrical meaning of the terms appearing. We thank Pavel Gumenyuk for helping us to simplify the original proof we found.
\end{remark}

In the following example we show that the tip of a non-isolated radial slit might not correspond to a regular pole:
\begin{example}\label{no-tip}
Let $\{\theta_j\}\subset (0,1/2)$ be a sequence monotonically decreasing to $0$. Let $T_\infty:=1$ and let $T_j:=e^{2\pi i \theta_j}$ for $j\in \N$. Let $\Gamma_j:=\{sT_j: s\geq 1\}$ for $j\in \N\cup \{\infty\}$ and set
\[
\Omega_m:=\C\setminus \left(\bigcup_{k=1}^m \Gamma_k \cup \overline{\Gamma_k}\cup \Gamma_\infty\right), \quad m\in \N\cup\{\infty\}.
\]
For a fixed $m\in \N$, the domain $\Omega_m$ is a radial $(2m+1)$-slit domain symmetric with respect to the real axis, and, as $m\to \infty$, the sequence $\{\Omega_m\}$ converges in the kernel sense  to the simply connected domain $\Omega_\infty$, which has  infinitely many isolated radial slits collapsing to a non-isolated one, $\Gamma_\infty$.

Fix $m\in \N \cup\{\infty\}$. Let $h_m:\D \to \Omega_m$ be the unique Riemann mapping normalized so that $h_m(0)=0, h_m'(0)>0$. Since $\Omega_m$ is symmetric with respect to the real axis, it follows that $h_m(\overline{z})=\overline{h_m(z)}$ for all $z\in \D$.  From this, it is not difficult to see  that $\lim_{(0,1)\ni r\to 1}h_m(r)=T_\infty$ for all $m\in \N\cup\{\infty\}$. By the prime-ends theory, $h_m$ is continuous (as a map with values in $\C\mathbb P^1$) up to $\de \D$ for $m\in \N$ while $h_\infty$ is continuous on $\oD\setminus\{1\}$ and has non-tangential limit $T_\infty$ at $1$. Again by symmetry,  if $a_{j,m}\in \de \D$ with $j=1,\ldots, m$ are such that $h_m(a_{j,m})=T_j$, then $\Im a_{j,m}>0$ and $h_m(\overline{a_{j,m}})=\overline{T_j}$.

Moreover, $\Re a_{j,m}>\Re a_{k,m}$ for all $k<j\in \{1,\ldots, m\}$. In order to see this, let $L_{j,1}$ be the segment between $1$ and $T_j$. Then $h_m^{-1}(L_{j,1})$ is a crosscut in $\D$ ending in $1$ and $a_{j,m}$ which divides $\D$ in two regions $A_0$ and $A_1$, say, with $0\in A_0$. Since $h_m(0)=0$, $h_m(A_1)$ is the region in $\Omega_m$ contained in the open convex hull between $\Gamma_\infty$ and $\Gamma_j$. As this region does not contain $T_k$, it follows that $a_{k,m}\in \overline{A_0}$, that is $\Re a_{k,m}<\Re a_{j,m}$. Also (see, {\sl e.g.}, \cite[Ex. 4 pag. 90]{P2})
\begin{equation}\label{veloc-a}
|1-a_{j,m}|^2\leq K_m |1-T_j|, \quad j=1,\ldots, m,
\end{equation}
for a certain constant $K_m>0$.

Note that the angle between $T_j-0$ and $T_{j+1}-0$ (clockwise orientation) is $2\pi (\theta_j-\theta_{j+1})$ for $j\in \N$, while the angle between $\overline{T_1}-0$ and $T_1-0$   is $2\pi (1-2\theta_1)$. Let denote by $2\pi\theta_{\infty,m}$ the angle between $T_m-0$ and $\overline{T_m}-0$.

For each fixed $m\in \N\cup \{\infty\}$, let $\phi^m_t(z):=h_m^{-1}(e^{-t}h_m(z))$. Then $(\phi_t^m)$ is a semigroup of holomorphic self-maps of $\D$ and we denote by $G_m(z):=-zp_m(z)$  the associated infinitesimal generator. By property (1).(iii) in Proposition \ref{Unival-VectorField} it follows that the K\"onigs function associated with $G_m(z)$ is $h_m(z)/h'_m(0)$. Note that this implies that $p_m(\overline{z})=\overline{p_m(z)}$ for all $z\in \D$.

If $m\neq\infty$, by Proposition \ref{forma-pradial}, the infinitesimal generator $G_m$ has $2m+1$ boundary regular null points. By symmetry, it is easy to see that $-1$ is one of such points. Moreover, if we label  by $b_{j,m}$   the boundary regular null point of $G_m$ contained in the arc between $a_{j,m}$ and $a_{j+1,m}$ with $j=1,\ldots, m-1$ and by $b_{m,m}$ the boundary regular null point contained in the arc between $a_{m,m}$ and $1$, then $\overline{b_{j,m}}$ for $j=1,\ldots, m$ are the other boundary regular null points of $G_m$. Hence, using the notations previously introduced, Proposition \ref{forma-pradial} implies that
\begin{equation}\label{p-m}
\begin{split}
\frac{1}{p_m(z)}&=\frac{(1-2\theta_1)}{2}\frac{1-z}{1+z}\\&+\sum_{j=1}^{m-1} (\theta_j-\theta_{j+1})\frac{1-z^2}{(z-b_{j,m})(z-\overline{b_{j,m}})}+\theta_{\infty,m}\frac{1-z^2}{(z-b_{m,m})(z-\overline{b_{m,m}})}.
\end{split}
\end{equation}
As the sequence of domains $\{\Omega_m\}$ kernel converges  to $\Omega_\infty$, the Carath\'eodory kernel convergence theorem implies that $\{h_m\}$ converges uniformly on compacta of $\D$ to  $h_\infty$. As a consequence, $\{p_m\}$ converges uniformly on compacta to  $p_\infty$, which, from \eqref{p-m}, it is not difficult to be seen having the form
\begin{equation}\label{forma-p}
\frac{1}{p_\infty(z)}=\frac{(1-2\theta_1)}{2}\frac{1-z}{1+z}+\sum_{j=1}^\infty (\theta_j-\theta_{j+1})\frac{1-z^2}{(z-b_{j})(z-\overline{b_{j}})},
\end{equation}
where $b_j=\lim_{m\to \infty} b_{j,m}$.  By Lemma \ref{angulo}, the semigroup $(\phi^\infty_t)$ has a sequence of boundary regular fixed points with dilation $\theta_j-\theta_{j+1}$, $j\in \N$. From this  and from the fact that the function $p_\infty$ extends meromorphic on $\C\setminus\{1\}$ and its zeros are the $b_j$'s, it follows that the $b_j$'s are actually boundary regular fixed points and also $b_j$ belongs to the arc with extremes $a_{j,\infty}$ and $a_{j+1,\infty}$ for each $j\in \N$.

Now we want to show that, for a suitable choice of $\{\theta_j\}$, the point $1$, which corresponds to the (non isolated) tip $h_\infty(1)=T_\infty$ is not a regular pole of $G_\infty(z)$. This is the case if and only if
\[
\lim_{(0,1)\ni r\to 1} \frac{1}{p_\infty(r)(1-r)}=\infty.
\]
Now, by \eqref{forma-p} and by Fatou's lemma, this condition holds if
\[
\sum_{j=1}^\infty \frac{\theta_j-\theta_{j+1}}{|1-b_j|^2}=\infty.
\]
However, by \eqref{veloc-a}
\[
\sum_{j=1}^\infty \frac{\theta_j-\theta_{j+1}}{|1-b_j|^2}\geq \sum_{j=1}^\infty \frac{\theta_j-\theta_{j+1}}{|1-a_{j,\infty}|^2}\geq \sum_{j=1}^\infty \frac{\theta_j-\theta_{j+1}}{K_\infty |1-T_j|}\simeq \sum_{j=1}^\infty \frac{\theta_j-\theta_{j+1}}{\theta_j},
\]
and the last series diverges if for instance $\theta_j=1/j$.
\end{example}

\end{document}